\DeclareMathAlphabet{\mathpzc}{OT1}{pzc}{m}{it}
\newtheorem{theorem}{Theorem}[section]
\newtheorem{lemma}{Lemma}[section]
\newtheorem{assumption}{Assumption}[section]
\theoremstyle{definition}
\theoremstyle{remark}
\numberwithin{equation}{section}
\date{}
\begin{document}

\markboth{ }{Weighted estimates for time fractional parabolic equations}

\date{}
\baselineskip 0.22in

\title{Weighted estimates for time-fractional parabolic equations with VMO coefficients}
\author{Jia Wei He$^{1,2 ,}$\footnote{E-mail: jwhe@gxu.edu.cn } , Lu Lu Tao$^{1}$ \\[1.8mm]
\footnotesize  {1.School of Mathematics and Information Science, Guangxi University, Nanning 530004, P.R. China}\\[1.5mm]
\footnotesize  {2. Guangxi Center for Mathematical Research, Guangxi University, Nanning 530004, P.R. China}\\[1.5mm]}
\date{}
\maketitle

\begin{abstract}
This paper is devoted to the weighted estimates and the solvability of time-fractional parabolic equations. The leading coefficients \(a^{ij}(t,x)\) are assumed to have small mean oscillations in \((t,x)\) locally, in both non-divergence and divergence forms, in the whole space. By employing appropriate odd and even extensions along with suitable boundary value conditions, we derive the corresponding results for the half-space. The proofs rely on the application of the Fefferman-Stein theorem and the Hardy-Littlewood maximal function theorem in the context of weighted mixed spaces.\\
\smallskip\noindent
{\bf Keywords:} Time-fractional parabolic equations; Mixed-norm estimates; VMO coefficients.\\[2mm]
{\bf 2020 MSC:} 26A33; 34A12; 35R11
\end{abstract}

\baselineskip 0.25in
\section{Introduction}
We focus on parabolic equations with  time fractional derivative in nondivergence form
\begin{equation}\label{e1.1}
-\partial_t^\alpha u+a^{ij}(t,x)D_{ij}u+b^i(t,x)D_iu+c(t,x)u=f ,
\end{equation}
and in divergence form
\begin{equation}\label{e1.2}
-\partial_t^\alpha u+D_{i}\left(a^{i j} D_{j} u+a^{i} u\right)+b^{i} D_{i} u+c u=D_if +g ,
\end{equation}
 with given zero initial conditions $u(0,\cdot)=0$ within a cylinder domain $(0,T)\times\Omega$, as the whole space $\mathbb{R}^{d}$ and the half space $\mathbb{R}_+^d:=\{(x^1,\ldots,x^d)\in\mathbb{R}^{d}:x^1>0\}$, where $\partial_t^\alpha $ represents the Caputo fractional derivative of order $ \alpha\in(0,1)$. 

Equations \eqref{e1.1} and \eqref{e1.2} are classified as a time nonlocal type parabolic equation when \(0 < \alpha < 1\) and as a fractional wave equation when \(1 < \alpha < 2\). The exploration of these equations is driven by their importance in the field of physics, fractional parabolic equations are essential for modeling anomalous diffusion on fractals and have broad applications across physics and probability theory, these equations are particularly significant in areas such as continuous-time random walks \cite{me1}, certain types of amorphous semiconductors, highly porous materials \cite{an, me2} and other related researches. The fractional wave equation will govern intermediate processes between diffusion and wave propagation in viscoelastic media \cite{Mainardi10}. Equations \eqref{e1.1} and \eqref{e1.2} are associated with the classical Volterra-type equation \cite{pr}, which is an integral equation involving the convolution with the kernel being a ramp function or a power function
$$u(t,x)+\int\limits_{0}^{t}a(t-s)Av(s,x) ds=f(t,x),$$
where $A$ is a sectorial operator independent of time.
Observe that, Zacher \cite{za} obtained a unique solution for this type of Volterra equation by applying operator theory.
For the version of evolution equations, Luchko and Yamamoto \cite{lu} discussed initial-boundary value problems and obtained the uniqueness of both strong and weak solutions by the maximum principle. Kian, et al. \cite{kian} considered an inverse problem for time fractional diffusion equations and obtained its global uniqueness of the Riemannian metric. Thereafter,  Helin et al. \cite{Helin} extended the results to space-time fractional diffusion equations connected compact Riemannian manifold without boundary.
Oka et al. \cite{oka} deduced existence of solutions for time fractional semilinear parabolic
equations in Besov–Morrey spaces. 
The fractional equations \eqref{e1.1} and \eqref{e1.2}, which depend on time and spatial variables, can be abstracted to form nonautonomous fractional evolution equations. This abstraction is considered in the case where the coefficients are  H\"{o}lder continuous in the time variable and have an $L^p$-measure in the spatial variable, with the domain being of the invariant constant type, the problem admits a strong solution, as proven by El-Borai \cite{El-Borai}, and a classical solution, as shown by Zhou and He \cite{Zhou-He24} as well as the almost sectorial operator in \cite{He-Zhou24}. Bajlekova \cite{Bajlekova} obtained the $L_p$-maximal regularity of solutions. This was also demonstrated by Mahdi \cite{Mahdi} in the setting of Hilbert spaces. When $A(t) = A + B(t)$, where $A$ is the infinitesimal generator of an $\alpha$-resolvent family and $B$ is a strongly continuous operator satisfying the Lipschitz condition, Henr\'{i}quez et al. \cite{Hernan} established the existence of solutions for order $\alpha \in (1, 2)$. Recently,  Cort\'{a}zar et al. \cite{co} considered the large-time behavior of solutions for nonhomogeneous heat equations with memory.

Note that, in the limit case 
\(\alpha = 1\), Krylov \cite{kr1} developed an 
\(L_p\)-theory for elliptic and parabolic equations in divergence and non-divergence forms with VMO coefficients. The main conclusions were further explored with mixed norms in \cite{kr2}. Dong and Kim \cite{dong} derived mixed-norm weighted 
$L_p$-estimates for elliptic and parabolic equations with (partially) BMO coefficients by utilizing Fefferman-Stein sharp functions theorems and $A_p$
weights. The related results on the coefficient problem of elliptic and parabolic equations, see, e.g. \cite{by1, dong, dong1, dong2, hal, ki, kim, kr1, kr3} and references therein. 
These coefficient discussions have been extended to fractional problems. Kim et al. \cite{ki} showed the $L_p$-$L_q$ estimates of solutions and worked on the existence results with variable coefficients $a^{ij}(t,x)$ piecewise continuous in $t$ and uniformly continuous in $x$ for the form:
\[
\partial_t^\alpha u(t,x) = a^{ij}(t,x)u_{x^ix^j}(t,x) + f(t,x,u), \quad t > 0, \quad x \in \mathbb{R}^d,
\]
where \(\alpha \in (0, 2)\). Dong and Kim \cite{dong4} noted that $L_p$-solvability is indeed just required with coefficients merely measurable in $t$ and small mean oscillations in $x$, which is also known as VMO coefficients. Meanwhile, the authors in \cite{dong5} complemented the results of \cite{dong4} and studied divergence form parabolic equations for \(\alpha \in (0, 1)\) with VMO coefficients and 
$$-\partial_t^\alpha u+D_i(a^{ij}D_ju+a^iu)+b^iD_iu+cu=D_ig_i+f$$
in the whole space and in the half space. They focused on the case where the main coefficients are measurable in $t$ and small mean oscillations in $x$, proving the solvability in \(L^q(L^p)\) spaces when \(q \geq p\). This work generalizes previous results to cover time-dependent coefficients and contributes to the understanding of parabolic equations with VMO coefficients.
 In the special case that coefficients $a^{ij}(t,x)=\delta_{ij}$, Han el at. \cite{han} introduced the Muckenhoupt weights for the fractional diffusion-wave equation
$$\partial_t^\alpha u(t,x)=\Delta  u(t,x)+f(t,x),\quad t>0,~~x\in\mathbb{R}^{d},$$
where $\alpha\in(0,2)$, they obtained the solvability of solutions with weighted $L_p$-$L_q$ theory. 
Recently, Dong and Kim considered nondivergence form fractional parabolic equations with VMO and obtained significant weighted mixed-norm estimates, as detailed in \cite{dong1}. Subsequently, Dong and Liu \cite{dong2} presented weighted \(L_p\) estimates for fractional wave equations across various domains, including the whole space, a half space, and cylindrical domains. These results emphasize the importance of weighted mixed-norm estimates and the solvability of equations in both nondivergence and divergence forms. And the difference between the leading coefficients $a^{ij}$ of \cite{dong1} and \cite{dong2}  is that the coefficients of the former are measurable in the time variable and small mean oscillations in the spatial variables, while those of the latter have small mean oscillations of $(t,x)$. Then, Dong and Kim \cite{dong6} studied time fractional parabolic equations in divergence and non-divergence forms
with partially SMO coefficients. The leading coefficients $a^{ij}$ are merely measurable in $(t,x_1), 1\leq i,j\leq d, (i,j)\neq(1,1)$ and the coefficient $a^{11}$ is merely measurable locally either in $t$ or 
$x_1$. They obtained the unique solvability in Sobolev spaces by using $L_{p_0}$-mean oscillation estimates with $p_0 \in (1,\infty)$.

Building on the previous works, 
it is notice that the results of fractional parabolic equations can be further improved, the contributions of this paper could be summarized as follows: Let $p,q\in(1,\infty)$, define the weight $w:=w_1(t)w_2(x)$ where $w_1(t)\in A_p(\Omega)$ and $w_2(x)\in A_q(\Omega)$. For $f\in L_{p,q,w}\left((0, T)\times\Omega\right)$, we show that there is a unique solution $u$ in the weighted space, which satisfies  fractional parabolic equation \eqref{e1.1} of non-divergence form  in the half space with the zero initial condition such that
$$\|\partial_t^\alpha u\|_{L_{p,q,w}}+\sum^{2}_{j=0}\|D^ju\|_{L_{p,q,w}}\lesssim \|f\|_{L_{p,q,w}}.$$
Both in the whole space and a half space, we establish the existence of a unique solution $u$ in the weighted space for equation \eqref{e1.2} in divergence form as
$$\left\|\partial_{t}^{\alpha} u\right\|_{\mathbb{Z}_{p, q, w}^{-1}}+\sum^{1}_{j=0}\|D^ju\|_{L_{p,q,w}}\lesssim\|f_i\|_{L_{p,q,w}}+\|g\|_{L_{p,q,w}}.$$
In this paper, we consider the weighted estimates and the solvability of time-fractional
parabolic equations with WMO coefficients that can be seen as a special case of \cite{dong6}. It is worth noting that we use  different methods, achieve results consistent with \cite{dong6} and give proofs of the correlation in the half space.

The paper is organized as follows. In Section \ref{sec2}, we provide some definitions of key symbols, give function spaces, and present our main results. Then, the Weight mixed-norm estimates in the half space for equation \eqref{e1.1} and the proof of Theorem \ref{the2.1} are detailed in Section \ref{sec3}. Section \ref{sec4} covers weight mixed-norm estimates for equation \eqref{e1.2} in the whole space, along with the proof of Theorem \ref{the2.2}. Finally, Section \ref{sec5} proves a similar result for the half space.

\section{Preliminaries}\label{sec2}
We first present some notation and facts related to the fractional calculus. We denote the Riemann-Liouville fractional integral of order $\alpha\in(0,1)$ is defined by
$$I_a^\alpha v=\frac{1}{\Gamma(\alpha)}\int_a^t(t-s)^{\alpha-1}v(s)\:ds,$$
for $-\infty<a<t$ and the Caputo fractional derivative is given by
$$D_t^\alpha v=\frac{1}{\Gamma(1-\alpha)}\partial_t\int_0^t(t-s)^{-\alpha}v(s )\:ds=\partial_tI_0^{1-\alpha}v,$$
we have $\partial_t^\alpha v(t)=D_t^\alpha\left(v(t )-v(0 )\right)$ and we have $\partial_t^{\alpha}v=D_t^\alpha v$
for smooth functions $v$ with $v(0 )=0$.  We use $\partial_t^\alpha v$ to indicate $\partial_tI_a^{1-\alpha} v$ for $v(\cdot )=0$.

The parabolic cylinder is given by
\begin{equation*}
Q_{R_{1},R_{2}}(t,x)=(t-R_{1}^{\frac{2}{\alpha}},t)\times B_{R_{2}}(x),
\end{equation*}
where $B_{R_{2}}(x)=\{z\in\mathbb{R}^{d}:|z-x|<R_{2}\}$. Particularly, $Q_R(t,x):=Q_{R,R}(t,x)$, $B_R:=B_R(0)$ and $Q_R:=Q_R(0,0)$. 

For $p\in(1,\infty)$, $w$ is non-negative function and the set $A_p(\mathbb{R}^{d},dy)$ satisfy
$$[w]_{A_{p}(\mathbb{R}^{d})}:=\sup\limits_{y^{\ast}\in\mathbb{R}^{d},R>0}
\left(\displaystyle\fint_{B_R(y^{\ast})}\:w(y)\:dy\right)
\left(\displaystyle\fint_{B_R(y^{\ast})}\:(w(y))^{\frac{-1}{p-1}}\:dy\right)
^{p-1}<+\infty,$$
where $B_R(y^{\ast})=\{y\in\mathbb{R}^d:|y-y^{\ast}|<R\}$ and clearly $[w]_{A_{p}}\geq1$.

Let $T\in(0,\infty)$, $\mathfrak{R}^d:=(0, T)\times \mathbb{R}^{d}$, and $\mathfrak{R}_{+}^d:=(0, T)\times \mathbb{R}^{d}_{+}$. For a constant $\varrho>0$, if $w=w_1(t)w_2(x)$ with $w_1(t)\in A_p(\mathbb{R},dt)$, $w_2(x)\in A_q(\mathbb{R}^{d},dx)$, $[w_1]_{A_p}\leq \varrho$, and $[w_2]_{A_q(\mathbb{R}^{d})}\leq \varrho$, we write $[w]_{p,q}\leq \varrho$ in the whole space and $[w]_{p,q,\mathbb{R}^{d}_{+}}\leq \varrho$ in the half space. Then, the set  $L_{p,q,w}\left(\mathfrak{R}^{d}\right)$ is defined with
$$\|f\|_{L_{p,q,w}(\mathfrak{R}^{d})}:=\left(\int_0^T\left(\int_{\mathbb{R}^{d}}|f(t,x)|^q\:w_2(x)dx\right)^{p/q}w_1(t)\:dt\right)^{1/p}<+\infty,$$
for measurable functions $f$, where $w_1\in A_p(\mathbb{R},dt)$ and $w_2\in A_q(\mathbb{R}^{d},dx)$ on $\mathfrak{R}^{d}$.

We denote $v \in \mathbb{Z}_{p, q, w, 0}^{\alpha, 2}\left(\mathfrak{R}^d\right)$ if there is a sequence $ v_{n} \in C_{0}^{\infty}\left([0, T] \times \mathbb{R}^{d}\right)$ vanishing for large $|x|$, $v_{n}(0, x)=0$ and as $n\to\infty$, 
\begin{equation}\label{e2.1}
\|v_n-v\|_{\mathbb{Z}_{p,q,w}^{\alpha,2}(\mathfrak{R}^d)}:=\|\partial_t^\alpha v_n-\partial_t^\alpha v\|_{L_{p,q,w}(\mathfrak{R}^{d})}+\sum_{j=0}^2\|D^jv_n-D^jv\|_{L_{p,q,w}(\mathfrak{R}^{d})}\rightarrow0.
\end{equation}

Particularly, $L_{p, q, w}\left(\mathfrak{R}^{d}\right):= L_{p}\left(\mathfrak{R}^{d}\right)$ when $p=q$ and $w=1$ and we write $ \mathbb{Z}_{p, w, 0}^{\alpha, 2}\left(\mathfrak{R}^{d}\right)$ for $p=q$. For any $R>0$, we denote $u \in \mathbb{Z}_{p, 0, \text{loc}}^{\alpha, 2}\left(\mathfrak{R}^{d}\right):=\mathbb{Z}_{p, 0}^{\alpha, 2}\left((0, T) \times B_{R}\right)$, $f\in L_{p, \text{loc}}\left(\mathfrak{R}^{d}\right):= L_p\left((0,T)\times B_R\right)$. Moreover, in the half space, writing $v \in \widetilde{\mathbb{Z}}_{p, q, w, 0}^{\alpha , 2}\left(\mathfrak{R}^d_{+}\right)$  if  $v \in \mathbb{Z}_{p, q, w, 0}^{\alpha, 2}\left(\mathfrak{R}^d_{+}\right)$ such that  $v=0$ on $\partial_{p}((0, T) \times  \mathbb{R}^{d}_{+})$. 

In the sequel, we introduce the function spaces with equations in divergence form. Denote $\partial _t^\alpha v\in \mathbb{Z} _{p, q, w}^{-1}( (a, T) \times \mathbb{R}^{d})$  satisfying 
$$-\partial_t^\alpha v=D_jf_j+g,$$
for some  $v, f_j, g\in L_{p,q,w}((a,T)\times\mathbb{R}^{d}), j=1,\ldots,d$. In this way, it yields
\begin{equation}\label{e2.2}
\int_{a}^T\int_{\mathbb{R}^{d}}\partial_t^\alpha v\psi\:dxdt=\int_{a}^T\int_{\mathbb{R}^{d}}(g\psi-f_jD_j\psi)\:dxdt,
\end{equation} 
for any $\psi\in C_0^{\infty}([a,T)\times\mathbb{R}^{d})$, 
with the norm
\begin{equation*}
\|\partial_{t}^{\alpha}v\|_{\mathbb{Z}_{p,q,w}^{-1}( (a, T) \times \mathbb{R}^{d})}=\inf\Big\{\sum_{j=1}^d\ \|f_j\|_{L_{p,q,w}((a,T) \times\mathbb{R}^d)}+ \|g\|_{L_{p,q,w}((a,T) \times\mathbb{R}^d)}: \eqref{e2.2} ~ \mathrm{is~satisfied}\Big\}.
\end{equation*}
Following this, we denote $v \in\mathcal{Z}_{p, q, w}^{\alpha, 1}((a, T) \times \mathbb{R}^d)$ with norm
$$\|v\|_{\mathcal{Z}_{p,q,w}^{\alpha,1}((a,T)\times\mathbb{R}^d)}=\|\partial_t^\alpha v\|_{\mathbb{Z}_{p,q,w}^{-1}((a,T)\times\mathbb{R}^d)}+\sum_{i=0}^1\|D^iv\|_{L_{p,q,w}((a,T)\times\mathbb{R}^d)},$$
for some $v,Dv\in L_{p, q, w}((a, T) \times\mathbb{R}^d)$ and $\partial_t^\alpha v\in\mathbb{Z}_{p,q,w}^{-1}((a,T)\times\mathbb{R}^d)$.
Writing $v\in\mathcal{Z}_{p,q,w,0}^{\alpha,1}((a,T)\times\mathbb{R}^d)$ if there exists a sequence $\{v_n\}\subset C^\infty([a,T]\times\mathbb{R}^d)$ vanishing for large $|x|,v_n(a,x)=0$, and as $n\to\infty$,
$$\|v_n-v\|_{\mathcal{Z}_{p,q,w}^{\alpha,1}((a,T)\times\mathbb{R}^d)}\rightarrow 0.$$


Moreover, for any $(t,x)\in\mathbf{D}\subset\mathbb{R}^{d+1}$, $f\in L_{1,\mathrm{loc}}$, we introduce maximal functions
$$\mathcal{M}f(t,x)=\sup_{Q_R(v,z)\ni(t,x)}\fint_{Q_R(v,z)}|f(R,y)|\chi_{\mathbf{D}}\:dydR$$
and strong maximal functions
$$(\mathcal{SM}f)\left(t,x\right)=\sup_{Q_{R_1,R_{2}}(v,z)\ni(t,x)}\fint_{Q_{R_1,R_{2}}(v,z)}|f(R,y)|\chi_{\mathbf{D}}\:dydR.$$

\begin{assumption}\label{assump2.1}
There exists $\tau\in(0,1)$ satisfying
$$a^{ij}(t,x)\zeta^i\zeta^j\geq\tau|\zeta|^2\quad\text{and}\quad|a^{ij}|,|b^i|,|c|\leq\tau^{-1},$$
with $(t,x)\in\mathbb{R}^{d+1}$ and $\zeta\in\mathbb{R}^{d}$.
\end{assumption}
\begin{assumption}\label{assump2.2}
There exists a constant $\zeta_{0}\in(0,1]$ satisfying
$$\sup\limits_{i,j}\fint_{Q_R(t^{\ast},x^{\ast})}|a^{ij}-\overline{a}^{ij}|\leq\vartheta ,$$
where
$\overline{a}^{ij}=\displaystyle\fint_{Q_R(t^{\ast}, x^{\ast})}a^{ij}\:dxdt$ for $(t^{\ast},x^{\ast})\in\mathbb{R}^{d+1}$ and $0<R\leq \zeta_{0}$.
\end{assumption}
\begin{assumption}\label{assump2.3}
If $x^{\ast}\in\mathbb{R}^{d}_+$,
based on Assumption \ref{assump2.2}, we need
$$\sup\limits_{i,j}\fint_{Q_R(t^{\ast},x^{\ast})\cap\mathfrak{R}^d_{+}}|a^{ij}-\overline{a}^{ij}|\:dxdt\leq 4\vartheta,$$
where
$\overline{a}^{ij}=\displaystyle\fint_{Q_R(t^{\ast},x^{\ast})\cap\mathfrak{R}^d_{+}}a^{ij}\:dxdt$.
\end{assumption}

In the sequel, we introduce the main results of this paper.
\begin{theorem}\label{the2.1}
Let $p, q\in ( 1, \infty )$, and $[w]_{p,q}\leq \varrho$ for $\varrho\in [ 1, \infty )$. Under Assumption \ref{assump2.3}, for $\vartheta= \vartheta(d, \tau, \alpha, p, q, \varrho) \in (0, 1)$ and  $f \in L_{p,q,w}\left(\mathfrak{R}^{d}_{+}\right)$, then there exists a unique solution $u \in\widetilde{\mathbb{Z}}_{p, q, w, 0}^{\alpha , 2}\left ( \mathfrak{R}^d_{+}\right )$ satisfying
\begin{equation}\label{e2.3}
 \left\{
\begin{aligned}
&-\partial_t^\alpha u+Lu=f(t,x), &&
\text{in }\mathfrak{R}^{d}_{+}, \\
&u=0, &&
\text{on }(0,T)\times \partial\mathbb{R}^{d}_+.
\end{aligned}\right.
\end{equation}
where $$
 Lu=a^{ij}(t,x)D_{ij}u+b^i(t,x)D_iu+c(t,x).$$
Furthermore, we get
\begin{equation}\label{e2.4}
\|\partial_t^\alpha u\|_{p,q,w}+\|u\|_{p,q,w}+\|Du\|_{p,q,w}+\|D^2u\|_{p,q,w}\lesssim\|f\|_{p,q,w},
\end{equation}
where the hidden constant depends on $d,\tau,\alpha,p,q,\varrho,\zeta_{0},T$ and $\|\cdot\|_{p, q, w}=\|\cdot\|_{L_{p,q,w}(\mathfrak{R}^d_{+})}$.
\end{theorem}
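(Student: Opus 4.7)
The plan is to deduce Theorem \ref{the2.1} by the standard Fefferman--Stein + sharp-function + continuity-method architecture adapted to the weighted mixed-norm setting. Uniqueness will follow once the a priori estimate \eqref{e2.4} is established, so the real content is the a priori bound. For existence, I would fix $s \in [0,1]$, connect $L_s = (1-s)(\Delta - 1) + sL$, and use the method of continuity on the scale of spaces $\widetilde{\mathbb{Z}}^{\alpha,2}_{p,q,w,0}(\mathfrak{R}^d_+)$; the $s=0$ endpoint reduces to a model half-space problem with constant coefficients, which is already in the literature (and can itself be treated by the mean-oscillation machinery below with the perturbation term vanishing), and the uniform estimate transports solvability to $s=1$.

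For the a priori estimate the core step is a mean-oscillation bound. For any parabolic cylinder $Q_R(t_0,x_0)$ (either interior or touching $\{x^1 = 0\}$), I would split $u = v + h$, where $v$ solves the associated constant-coefficient half-space equation $-\partial_t^\alpha v + \overline{a}^{ij} D_{ij} v = \mathbf 1_{Q_R^c}(f - (L - \overline{L})u - b^i D_i u - cu)$ (extended appropriately in time to carry the zero initial data), and $h = u - v$ absorbs the localized inhomogeneity plus the coefficient oscillation. For $v$ I would invoke the $L_\infty$-type Hölder/Campanato estimate available for the model constant-coefficient fractional equation in $\mathfrak{R}^d_+$ (as established in \cite{dong1,dong2,dong4,dong5}) to control oscillations of $D^2 v$ on $Q_r$, $r \ll R$, by $(r/R)^\sigma$ times an averaged $L_1$ norm of $D^2 u$ on $Q_R$. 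For $h$ I would bound $|h|$ in $L_1(Q_R)$ by the Caccioppoli/regularity estimates for the same model equation applied to the right-hand side, where the term $(L-\overline L)u$ contributes a factor of $\vartheta$ via Assumption \ref{assump2.3} together with Hölder's inequality. Combining, one obtains a pointwise inequality of the form
\begin{equation*}
\bigl(|D^2 u|\bigr)^\sharp(t_0,x_0) \;\le\; C\,\vartheta^{1/s_0}\mathcal M(|D^2u|^{s_0})^{1/s_0} + C\,\mathcal M(|f|^{s_0})^{1/s_0} + (\text{lower-order})
\end{equation*}
for some $s_0 \in (1,\min(p,q))$, and analogous inequalities for $\partial_t^\alpha u$ and $Du$, where $\mathcal M$ is the parabolic maximal operator tailored to the scaling $R \mapsto R^{2/\alpha}$.

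The passage from the sharp-function inequality to \eqref{e2.4} is then executed by the weighted mixed-norm Fefferman--Stein theorem on $\mathfrak{R}^d_+$, followed by the weighted Hardy--Littlewood maximal inequality on mixed-norm spaces with weight $w = w_1(t) w_2(x)$ satisfying $[w_1]_{A_p}, [w_2]_{A_q} \le \varrho$. This is precisely the point where one absorbs the $C\vartheta^{1/s_0} \|D^2u\|_{p,q,w}$ term into the left-hand side by choosing $\vartheta = \vartheta(d,\tau,\alpha,p,q,\varrho)$ sufficiently small, which fixes the quantitative VMO threshold in the statement. The lower-order $\|u\|_{p,q,w} + \|Du\|_{p,q,w}$ pieces are handled either by interpolation or by absorbing after a translation $L \rightsquigarrow L - \lambda$ with $\lambda$ large and then scaling back, using the zero initial condition and the finiteness of $T$ to remove the shift.

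The main obstacle I anticipate is the mean-oscillation estimate on boundary cylinders $Q_R(t_0,x_0)$ with $x_0^1 \approx 0$: the model fractional half-space equation lacks the symmetry that, in the classical $\alpha = 1$ case, allows an odd reflection, because the memory kernel in $\partial_t^\alpha$ prevents clean reflection even in the time variable, and because $\overline{a}^{ij}$ are not assumed to respect the boundary structure. Here I would rely on the already-available boundary Hölder/Campanato estimates for the model fractional problem proved in \cite{dong2,dong5} -- applied in the half-space form of Assumption \ref{assump2.3} (which is the reason for the factor $4\vartheta$ there) -- and then glue the interior and boundary mean-oscillation estimates through a single sharp function defined with the strong parabolic maximal operator $\mathcal{SM}$ from Section \ref{sec2}. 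Once this boundary ingredient is in place, the remainder of the proof is bookkeeping with the weighted mixed-norm Fefferman--Stein and Hardy--Littlewood theorems.
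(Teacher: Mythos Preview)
Your architecture (mean-oscillation estimate, Fefferman--Stein sharp-function theorem, weighted maximal function, absorption of the $\vartheta$-term) is the same skeleton the paper uses, but two of your load-bearing choices diverge from the paper, and one of them is a real gap.

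\textbf{Boundary cylinders.} You dismiss odd reflection on the grounds that the memory kernel obstructs it, and plan instead to import boundary Campanato estimates from \cite{dong2,dong5}. In fact the paper does exactly the odd extension you rule out: for $u\in\widetilde{\mathbb Z}^{\alpha,2}_{p,q,w,0}(\mathfrak R^d_+)$ one sets $\widehat u(t,x)=u(t,|x^1|,x')\,\mathrm{sgn}\,x^1$ (Lemma \ref{lem3.3}), extends the weight and $f$ correspondingly, and thereby reduces the half-space mean-oscillation estimate to the whole-space one (Lemma \ref{lem3.1}). Your concern is misplaced because only a \emph{spatial} reflection is performed; the Caputo operator $\partial_t^\alpha$ commutes with it, and the frozen coefficients $\overline a^{ij}$ respect the reflection once averaged over $Q_R\cap\mathfrak R^d_+$ as in Assumption \ref{assump2.3}. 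Your alternative route via direct boundary H\"older estimates may be workable, but the references you cite are for divergence form and for $\alpha\in(1,2)$, so you would have to redo those estimates here rather than merely quote them.

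\textbf{Removing the lower-order remainder.} After the sharp-function step you are left with $\|D^2u\|_{p,q,w}+\|\partial_t^\alpha u\|_{p,q,w}\lesssim\|f\|_{p,q,w}+\|u\|_{p,q,w}$ (this is the paper's \eqref{e3.14}), and you propose to kill the $\|u\|$ term by ``$L\rightsquigarrow L-\lambda$ and scaling back using finiteness of $T$.'' For the Caputo derivative there is no substitution analogous to $u\mapsto e^{-\lambda t}u$ that turns $\partial_t^\alpha u$ into $\partial_t^\alpha u+\lambda u$, so the classical $\lambda$-shift does not go through; this is precisely why the paper replaces it with a time-slicing induction. One partitions $(0,T)$ into $n$ pieces via cutoffs $\varphi_l$, uses $\|u\|\lesssim T^\alpha\|\partial_t^\alpha u\|$ (Lemma \ref{lem3.6}, \eqref{e3.16}) on each short interval $(s_{n,l-1},s_{n,l+1})$ to make the constant small, controls the commutator $[\partial_t^\alpha,\varphi_l]$ via the bound \eqref{e3.15} on $I_0^{1-\alpha}$, and closes by induction on $l$. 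Without this (or an equivalent device tailored to the nonlocal time derivative), your proposal does not actually close the estimate \eqref{e2.4}. Existence in the paper likewise comes not from the method of continuity but from the already-established whole-space solvability (Lemma \ref{lem3.2}) transported through the odd extension.
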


\begin{theorem}\label{the2.2}
Let $p, q \in(1, \infty)$, and $[w]_{p,q}\leq \varrho$ for $\varrho\in [ 1, \infty )$. Under Assumption \ref{assump2.2}, for $\vartheta= \vartheta(d, \tau, \alpha, p, q, \varrho) \in (0, 1)$ and  $f_i,g \in L_{p,q,w}\left(\mathfrak{R}^{d}\right)$, $i=1,\ldots,d$, then there exists a unique solution $u \in\mathcal{Z}_{p, q, w, 0}^{\alpha, 1}\left (\mathfrak{R}^d\right )$ satisfying
\begin{equation}\label{e2.5}
-\partial_{t}^{\alpha} u+\mathfrak{L}u=D_{i} f_{i}+g \quad \text { in } \quad \mathfrak{R}^{d},
\end{equation}
where $$\mathfrak{L}u=D_{i}\left(a^{i j} D_{j} u+a^{i} u\right)+b^{i} D_{i} u+c u.$$
Moreover, we obtain
\begin{equation}\label{e2.6}
\left\|\partial_{t}^{\alpha} u\right\|_{\mathbb{Z}_{p, q, w}^{-1}}+\|u\|_{p, q, w}+\|D u\|_{p, q, w} \lesssim\|f\|_{p, q, w}+\|g\|_{p, q, w}
\end{equation}
where the hidden constant depends on $d,\tau,\alpha,p,q,\varrho,\zeta_{0},T$ and $\|\cdot\|_{p, q, w}=\|\cdot\|_{L_{p, q, w}\left(\mathfrak{R}^{d}\right)}$.
\end{theorem}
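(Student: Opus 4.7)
The plan is to establish the a priori estimate \eqref{e2.6} first, and then deduce existence by the method of continuity (with uniqueness being immediate from the estimate applied to the difference of two solutions). The case of pure constant coefficients serves as the anchor: there one has an explicit representation of the solution through the fractional heat kernel, which, combined with the divergence-form version of the results in \cite{dong5}, yields unweighted $L_p$-solvability for $-\partial_t^\alpha u + \overline{a}^{ij}D_{ij}u + a^{ij}D_i a^i \cdots$ with constants satisfying the ellipticity in Assumption \ref{assump2.1}. This plays the role of the model problem against which we perturb.

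The core analytic step is a mean oscillation (sharp function) estimate for $u$ and $Du$. For a parabolic cylinder $Q_r(t_0,x_0)\subset Q_R(t_0,x_0)$, I would decompose $u=v+w$ on $Q_R$, where $v$ solves the homogeneous equation with coefficients frozen to the averages $\overline{a}^{ij}, \overline{a}^{i}, \ldots$ on $Q_R$ and same boundary/initial data, while $w$ absorbs the data $f_i,g$ together with the perturbation terms $(a^{ij}-\overline{a}^{ij})D_j u + (a^i-\overline{a}^i)u$. Interior Hölder-type regularity for $v$ (inherited from the constant coefficient theory via a bootstrap: divergence form for constant coefficients reduces to the non-divergence form after differentiation) gives a Campanato-type decay
\begin{equation*}
\Bigl(\fint_{Q_r}|Dv-(Dv)_{Q_r}|^{p_0}\Bigr)^{1/p_0}\lesssim \bigl(r/R\bigr)^{\gamma}\Bigl(\fint_{Q_R}|Dv|^{p_0}\Bigr)^{1/p_0},
\end{equation*}
for some small $p_0\in(1,p\wedge q)$ and $\gamma>0$, while $w$ is estimated in $L_{p_0}(Q_R)$ by the unweighted divergence-form solvability applied to the datum $\widetilde f_i=f_i+(a^{ij}-\overline{a}^{ij})D_ju+(a^i-\overline{a}^i)u$ and $g$. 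Assumption \ref{assump2.2} will be used to bound $\|a^{ij}-\overline{a}^{ij}\|_{L_\infty\text{-average}}$ by $\vartheta$, so that the perturbation contribution is controlled by $\vartheta^{1/p_0}$ times the strong maximal function $\mathcal{SM}$ applied to $|Du|^{p_0}+|u|^{p_0}$.

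Next, I pass from this pointwise-type sharp function bound to the weighted mixed-norm estimate. Because the weight is a tensor product $w_1(t)w_2(x)$ with $w_1\in A_p(\mathbb{R},dt)$ and $w_2\in A_q(\mathbb{R}^d,dx)$, the Fefferman–Stein sharp function theorem and the Hardy–Littlewood maximal function theorem both hold in $L_{p,q,w}$ iteratively in the two variables, and $\mathcal{SM}$ is bounded on $L_{p,q,w}$ since it is dominated by the composition of one-variable maximal functions in $t$ and in $x$. Applying these to the mean oscillation inequality from the previous paragraph produces
\begin{equation*}
\|Du\|_{p,q,w}+\|u\|_{p,q,w}\lesssim \vartheta^{1/p_0}\bigl(\|Du\|_{p,q,w}+\|u\|_{p,q,w}\bigr)+\|f\|_{p,q,w}+\|g\|_{p,q,w},
\end{equation*}
and for $\vartheta=\vartheta(d,\tau,\alpha,p,q,\varrho)$ chosen sufficiently small the first term on the right is absorbed into the left. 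The bound on $\|\partial_t^\alpha u\|_{\mathbb{Z}^{-1}_{p,q,w}}$ then follows from the equation \eqref{e2.5} itself: one reads off $-\partial_t^\alpha u = D_i(\widetilde a^{ij}D_j u+\widetilde a^i u-f_i)+(b^iD_i u+cu-g)$ as a valid representation in $\mathbb{Z}^{-1}_{p,q,w}$, and the definition of the $\mathbb{Z}^{-1}$-norm gives the required bound in terms of $\|u\|_{p,q,w}$, $\|Du\|_{p,q,w}$, $\|f\|_{p,q,w}$ and $\|g\|_{p,q,w}$.

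Existence is then obtained by the standard continuation in the leading coefficients: interpolate between $(a^{ij},a^i,b^i,c)$ and the Laplacian with zero lower-order terms, using the a priori bound above, uniformly along the path. Density of smooth data and closedness of $\mathcal{Z}^{\alpha,1}_{p,q,w,0}$ upgrade the a priori statement to a full existence statement in the class of admissible solutions. The main obstacle I anticipate is the interior decay estimate for the constant-coefficient divergence-form fractional equation needed in the Campanato step, because in the time-fractional setting the natural differentiation trick used for $\alpha=1$ is delicate: one has to either work with $D_k u$ satisfying an equation of the same type, or pass through the non-divergence form representation after integration by parts, being careful that $\partial_t^\alpha$ distributes over test functions only via the $\mathbb{Z}^{-1}$ pairing from \eqref{e2.2}. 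The second delicate point is verifying that the reduction of $(a^{ij}-\overline{a}^{ij})D_ju+(a^i-\overline{a}^i)u$ to a perturbation in $\mathbb{Z}^{-1}_{p_0}$ indeed produces the $\vartheta^{1/p_0}$-smallness and not just boundedness, which forces the choice of $p_0$ strictly below $\min(p,q)$ together with Hölder's inequality on each $Q_R$.
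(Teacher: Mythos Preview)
Your overall plan---freeze coefficients, prove a mean oscillation bound for $Du$, and pass to weighted mixed norms via Fefferman--Stein and the strong maximal function---is the same machinery the paper uses in Lemma~4.1. But the step where you claim
\[
\|Du\|_{p,q,w}+\|u\|_{p,q,w}\lesssim \vartheta^{1/p_0}\bigl(\|Du\|_{p,q,w}+\|u\|_{p,q,w}\bigr)+\|f\|_{p,q,w}+\|g\|_{p,q,w}
\]
and then absorb is a genuine gap, for two reasons. First, Assumption~\ref{assump2.2} imposes VMO smallness only on the leading coefficients $a^{ij}$; the lower-order coefficients $a^i,b^i,c$ are merely bounded, so the contributions $a^i u$, $b^iD_iu$, $cu$ enter the right-hand side with no $\vartheta$ factor and cannot be absorbed. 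Second, the sharp function estimate in divergence form naturally controls the oscillation of $Du$, not of $u$, and it is set up for data of the form $D_if_i$; the zero-order source $g$ does not fit directly into that estimate without producing an extra factor of the scale $R$ that spoils the maximal-function bound. So from your argument you would only arrive at something like $\|Du\|_{p,q,w}\lesssim \|f\|_{p,q,w}+\|g\|_{p,q,w}+\|u\|_{p,q,w}$, with $\|u\|_{p,q,w}$ still uncontrolled, and the method of continuity cannot close.

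The paper resolves both issues with two additional devices you are missing. To handle $g$ and the lower-order terms it uses Agmon's idea (Lemma~4.2): introduce an extra spatial variable to convert the equation $-\partial_t^\alpha u+D_i(a^{ij}D_ju)-\sigma u=D_if_i+g$ into one of pure divergence type in $\mathbb{R}^{d+1}$, which yields $\sigma\|u\|_{p,q,w}+\sqrt{\sigma}\|Du\|_{p,q,w}\lesssim \sqrt{\sigma}\|f\|_{p,q,w}+\|g\|_{p,q,w}$ for all $\sigma\geq\sigma_0$; the lower-order terms are then absorbed by taking $\sigma_0$ large (Lemma~4.3). This still leaves a residual $\|u\|_{p,q,w}$ on the right when one returns to the original equation (write $g\mapsto g-\sigma u$), and the paper removes it by a time-slicing induction (proof of Theorem~\ref{the2.2}): cut with $\psi_l(t)$ supported on $[s_{n,l-1},s_{n,l+1}]$, use the commutator formula for $\partial_t^\alpha(\psi_l u)$ from Lemma~4.4 together with the mollification bound $\|\partial_t^\alpha(u\psi_l)^\epsilon\|_{p,q,w}\lesssim\epsilon^{-1}(\cdots)$ and the smallness of $(T/n)^\alpha$, and induct on $l$. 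This last step is genuinely nonlocal-in-time and is not replaceable by a Gronwall-type argument; without it (or an equivalent), your a priori estimate does not close.
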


\begin{theorem}\label{the2.3}
Let $p, q \in(1, \infty)$, and $[w]_{p,q}\leq \varrho$ for $\varrho\in [ 1, \infty )$. Under Assumption \ref{assump2.3}, for $\vartheta= \vartheta(d, \tau, \alpha, p, q, \varrho) \in (0, 1)$ and  $f_i,g \in L_{p,q,w}\left(\mathfrak{R}^{d}_+\right)$, $i=1,\ldots,d$, then there exists a unique solution $u \in\mathcal{Z}_{p, q, w, 0}^{\alpha, 1}\left (\mathfrak{R}^d_+\right )$ satisfying
\begin{equation}\label{e2.7}
 \left\{
\begin{aligned}
&-\partial_t^\alpha u+D_{i}\left(a^{i j} D_{j} u+a^{i} u\right)+b^{i} D_{i} u+c u=D_{i} f_{i}+g , &&
\text{in }\mathfrak{R}^{d}_{+}, \\
&-a^{1j}D_{j}u+a^1u=f_1, &&
\text{on }(0,T)\times \partial\mathbb{R}^{d}_+.
\end{aligned}\right.
\end{equation}
Furthermore, \eqref{e2.6} also holds true in the half space.
\end{theorem}

Next, we introduce a crucial conclusion to  give a control over the mean oscillation under Assumption \ref{assump2.2}.
\begin{lemma} [\cite{dong2}] \label{lem2.1}
If coefficient $a$ satisfies Assumption \ref{assump2.2}, $h\geq R^{\frac{2}{\alpha}}$,and $R\leq2^{-\frac{2}{\alpha}}\zeta_0\leq \zeta_0$. Then
$$\int_{(t^\ast-h,t^\ast)}\fint_{B_r(x^\ast)}|a-\overline{a}|\:dxdt<NhR^{-\frac{2}{\alpha}}\vartheta,$$
where $\overline {a}= \fint_{Q_{r}( t^\ast, x^\ast) }a \:dxdt$ and $N= N( \alpha, d).$
\end{lemma}

\section{Non-divergence form in the half space }\label{sec3}
In this section, we consider non-divergence form
equations in the whole space and the half space. Then, we give the proof for the Theorem \ref{the2.1}. 
\begin{lemma} \label{lem3.1}
Let $\widehat{p} \in(1, \infty)$ and $a^{ij}$ be constant. If $u \in \mathbb{Z}_{\widehat{p}, 0,loc}^{\alpha, 2}\left( \mathfrak{R}^d\right)$  satisfies 
$$\partial_t^\alpha u-a^{ij}D_{ij}u=f\quad\text{in}\quad\mathfrak{R}^d.$$
Then, there exists $\rho \in(0,\frac{1}{4})$ satisfying
\begin{equation}\label{e3.1}
\begin{aligned}
(|D^{2}u-(D^{2}u)_{Q_{\rho R}(t^{\ast},x^{\ast})}|)_{Q_{\rho R}(t^{\ast},x^{\ast})}
\lesssim&\rho^\beta\sum_{k=0}^{\infty}2^{-k\alpha}\left(|D^2 u|^{\widehat{p}}\right)_{(\widetilde{t},t^{\ast}) \times B_{\frac{R}{2}}(x^{\ast})}^{\frac{1}{\widehat{p}}}\\
&+(\rho^\beta+\rho^{\frac{-(d+\frac{2}{\alpha})}{\widehat{p}}})\sum_{k=0}^\infty C_k\left(|f|^{\widehat{p}}\right)_{(\overline{t},t^{\ast})\times B_{R}(x^{\ast})}^{\frac{1}{\widehat{p}}},
\end{aligned}
\end{equation}
with $\left(t^{\ast}, x^{\ast}\right) \in (0, T] \times \mathbb{R}^{d}$, where $\widetilde{t}=t^{\ast}-2^{k}(\frac{R}{2})^{\frac{2}{\alpha}}$, $\overline{t}=t^{\ast}-(2^{k+2}-2)R^{\frac{2}{\alpha}}$ and $\{C_k\}$ satisfies $\sum\limits_{k=0}^{\infty}c_{k}\leq N(d,\tau,\alpha,p)$. The hidden constant is only dependent of $d, \tau, \alpha,\widehat{p}$.
\end{lemma}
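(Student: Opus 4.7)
The plan is to combine an interior Hölder estimate for $D^2 w$, where $w$ solves the homogeneous equation, with an $L_{\widehat{p}}$-solvability estimate for pieces of $f$, while handling the time-nonlocality of $\partial_t^\alpha$ via a dyadic-in-time splitting of the source. This is the Campanato/Krylov mean-oscillation strategy adapted to fractional parabolic scaling; the new feature compared with the $\alpha = 1$ case is that the long memory of $\partial_t^\alpha$ forces an infinite series indexed by past dyadic time-annuli.

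First I would normalize by translating $(t^{\ast},x^{\ast})$ to the origin and invoking the parabolic scaling $t\mapsto\lambda^{2/\alpha}t$, $x\mapsto\lambda x$, which leaves the operator invariant up to a factor on $f$, so one may work on $Q_{1}$. Next I split the source as $f=\sum_{k\geq 0}f_{k}$, where $f_{0}=f\mathbf{1}_{(-2,0)\times B_{1}}$ and, for $k\geq 1$, $f_{k}=f\mathbf{1}_{I_{k}\times B_{1}}$ with $I_{k}\subset(-\infty,0)$ the $k$-th past dyadic interval of length $\sim 2^{k}$. For each $k$ solve the Cauchy problem $\partial_{t}^{\alpha}v_{k}-a^{ij}D_{ij}v_{k}=f_{k}$ on $\mathfrak{R}^{d}$ with zero initial condition; the global $L_{\widehat{p}}$-solvability established in Dong--Kim \cite{dong1} yields $\|\partial_{t}^{\alpha}v_{k}\|_{L_{\widehat{p}}}+\|D^{2}v_{k}\|_{L_{\widehat{p}}}\lesssim\|f_{k}\|_{L_{\widehat{p}}}$. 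Then $w:=u-\sum_{k}v_{k}$ satisfies the homogeneous equation on a cylinder containing $Q_{R/2}$.

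For $w$ I would use the interior Hölder estimate for $D^{2}w$ available for time-fractional parabolic equations with measurable-in-$t$, VMO-in-$x$ coefficients: $[D^{2}w]_{C^{\beta}(Q_{1/2})}\lesssim(|D^{2}w|^{\widehat{p}})_{Q_{1}}^{1/\widehat{p}}$ for some $\beta=\beta(d,\tau,\alpha,\widehat{p})>0$. Integrating over $Q_{\rho}$ gives
$$
(|D^{2}w-(D^{2}w)_{Q_{\rho}}|)_{Q_{\rho}}\lesssim\rho^{\beta}\,(|D^{2}w|^{\widehat{p}})_{Q_{1}}^{1/\widehat{p}},
$$
and by triangle inequality the right-hand side is controlled by $(|D^{2}u|^{\widehat{p}})_{Q_{1}}^{1/\widehat{p}}+\sum_{k}(|D^{2}v_{k}|^{\widehat{p}})_{Q_{1}}^{1/\widehat{p}}$. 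For $v_{0}$ the crude bound $(|D^{2}v_{0}|)_{Q_{\rho}}\lesssim\rho^{-(d+2/\alpha)/\widehat{p}}(|D^{2}v_{0}|^{\widehat{p}})_{Q_{1}}^{1/\widehat{p}}$ together with the solvability estimate supplies the $\rho^{-(d+2/\alpha)/\widehat{p}}$-term in \eqref{e3.1}. For $k\geq 1$ the source of $v_{k}$ is supported on a past time interval of distance $\sim 2^{k}$ from $Q_{\rho}$; sharp pointwise estimates for the fundamental solution of the time-fractional heat-type operator (via Mittag--Leffler representations) then give $\|D^{2}v_{k}\|_{L^{\infty}(Q_{1})}\lesssim 2^{-k\alpha}(|f|^{\widehat{p}})_{I_{k}\times B_{1}}^{1/\widehat{p}}$, producing the weight $2^{-k\alpha}$ in the first sum and the constants $C_{k}$ in the second.

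The main obstacle will be establishing the $2^{-k\alpha}$-decay in precisely the right form: one needs that the contribution to $D^{2}u$ near time $0$ coming from a source supported on a past dyadically far interval decays algebraically at the rate dictated by the fractional order $\alpha$. This requires careful integral representations against the Mittag--Leffler kernel of $\partial_{t}^{\alpha}-a^{ij}D_{ij}$ together with its spatial derivatives, and is the most delicate step because the coefficients $a^{ij}$ are only assumed measurable in $t$ and VMO in $x$. A secondary technical point is the Hölder regularity of $D^{2}w$ under VMO-in-$x$ coefficients, for which one uses the characterization through Campanato-type seminorms adapted to the fractional parabolic scaling. Once both ingredients are in place, substitution into the triangle inequality and undoing the rescaling yields \eqref{e3.1}.
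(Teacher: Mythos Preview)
The paper does not supply its own proof of Lemma~\ref{lem3.1}; it is quoted (together with Lemma~\ref{lem3.2}) as a result from Dong--Kim \cite{dong1}, so there is no in-paper argument to compare your sketch against. What follows is an assessment of your outline on its own terms and against the actual argument in \cite{dong1,dong4}.

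Your Campanato decomposition has the right shape, but the ingredient you invoke for the homogeneous piece is the wrong one. You claim a \emph{local} interior bound $[D^{2}w]_{C^{\beta}(Q_{1/2})}\lesssim(|D^{2}w|^{\widehat{p}})_{Q_{1}}^{1/\widehat{p}}$. For the Caputo operator this fails: because $\partial_t^\alpha$ carries infinite memory, a solution of the homogeneous equation in $Q_1$ is not determined by its values on $Q_1$, and the correct interior estimate (see \cite{dong4}, \S4) necessarily carries an infinite dyadic tail in time,
\[
[D^{2}w]_{C^{\beta}(Q_{1/2})}\;\lesssim\;\sum_{k\ge 0}2^{-k\alpha}\bigl(|D^{2}w|^{\widehat{p}}\bigr)^{1/\widehat{p}}_{(-2^{k},0)\times B_{1/2}}.
\]
It is precisely this nonlocal H\"older estimate that, after the triangle inequality $|D^2w|\le|D^2u|+|D^2v|$, produces the first sum $\rho^{\beta}\sum_k 2^{-k\alpha}(|D^2u|^{\widehat p})_{(\widetilde t,t^{\ast})\times B_{R/2}}$ in \eqref{e3.1}. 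In your write-up that sum has no origin: your local H\"older bound would yield only a single-cylinder term $(|D^2u|^{\widehat p})^{1/\widehat p}_{Q_1}$, which is too strong to be true. Relatedly, your plan to recover the $2^{-k\alpha}$ weights via Mittag--Leffler pointwise bounds on the fundamental solution for the far-in-time pieces $v_k$ is both misdirected (the weights belong to the $D^2u$ sum, not to the decay of $v_k$) and unavailable: the result in \cite{dong1} is stated for $a^{ij}=a^{ij}(t)$ merely measurable, so no explicit kernel is at hand. In the actual proof in \cite{dong1} one makes a single time cutoff, solves globally for the piece carrying the localized $f$ by $L_{\widehat p}$-solvability, and applies the nonlocal H\"older estimate above to the remainder; the infinite sum in $f$ with constants $C_k$ arises from the commutator $[\partial_t^\alpha,\eta]u$, not from fundamental-solution asymptotics.
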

\begin{proof}
Based on the result of \cite{dong1}, we get \eqref{e3.1} for functions $a^{ij}(t)$ related to $t$. Hence, when $a^{ij}$ is constant, we have the same result. The proof is completed.
\end{proof}

\begin{lemma} \label{lem3.2}
Let $ \beta > 0$, $R\in (0, \infty )$, $\rho \in (0, \frac{1}{4})$, $p, q\in$ $( 1, \infty )$, $\varrho\in$ $[1, \infty )$ and $[w]_{p,q,\mathbb{R}^{d}}\leq \varrho$.
There is $ \widehat{p} \in (1, \infty )$ and $\lambda \in (1, \infty )$ satisfying
$\widehat{p}<\widehat{p}\lambda<\min\{p,q\}$. Under Assumption \ref{assump2.2}, if $u\in \widetilde{\mathbb{Z}} _{p, q, w, 0}^{\alpha, 2}\left (\mathfrak{R}^d\right )$ has compact support in $(0, T) \times B_{\zeta_{0}}$ and satisfies
\begin{equation}\label{e3.2}
-\partial_t^\alpha u+a^{ij}(t,x)D_{ij}u=f
\end{equation}
in $\mathfrak{R}^d$, then  there holds
\begin{equation}\label{e3.3}
\begin{aligned}
&(|D^{2}u-(D^{2}u)_{(t^{\ast}-(\rho R)^{\frac{2}{\alpha}},t^{\ast})\times B_{\rho R}(x^{\ast})}|)_{(t^{\ast}-(\rho R)^{\frac{2}{\alpha}},t^{\ast})\times B_{\rho R}(x^{\ast})} \\
\lesssim & \rho^{-\frac{d}{\widehat{q}}}(|D^{2}u|^{\widehat{p}})_{(t^{\ast}-(\rho R)^{\frac{2}{\alpha}},t^{\ast})\times B_{\rho R}(x^{\ast})}^{\frac{1}{\widehat{p}}}+
\rho^\beta\sum_{k=0}^{\infty}2^{-k\alpha}\left(|D^2u|^{\widehat{p}}\right)_{(\widetilde{t},t^{\ast}) \times B_{\frac{R}{2}}(x^{\ast})}^{\frac{1}{\widehat{p}}}\\
&+(\rho^\beta+\rho^{\frac{-(d+\frac{2}{\alpha})}{\widehat{p}}})\sum_{k=0}^{\infty}C_{k}\left(|f|^{\widehat{p}}\right)_{(\overline{t},t^{\ast})\times B_{R}(x^{\ast})}^{\frac{1}{\widehat{p}}}\\
\quad&+(\rho^\beta+\rho^{\frac{-(d+\frac{2}{\alpha})}{\widehat{p}}})\vartheta^{\frac{1}{\gamma\widehat{p}}}\sum_{k=0}^{\infty}C_{k}2^{\frac{k+2}{\gamma\widehat{p}}}\left(|D^{2}u|^{\lambda \widehat{p}}\right)_{(\overline{t},t^{\ast})\times B_{R}(x^{\ast})}^{\frac{1}{\lambda \widehat{p}}},
\end{aligned}
\end{equation}
for $( t^{\ast}, x^{\ast}) \in (0, T] \times \mathbb{R}^{d}$, where $\gamma =\frac{\lambda}{\lambda-1}$, $\widehat{q}=\frac{\widehat{p}}{\widehat{p}-1}$, and the hidden constant is dependent of $d, \tau, \alpha, p, q$ and $\varrho$. And for $t \leq 0$, both $u$ and $f$ are equal to $0$.
\end{lemma}
\begin{proof}
 Set $\beta_1,\beta_2>0$ satisfying $p-\beta_1>1,q-\beta_2>1 $. In view of $w_1\in A_p(\mathbb{R}^{d},dt)$, $w_2\in A_q(\mathbb{R}^{d},dx)$, we get $w_1\in A_{p-\beta_1}(\mathbb{R}^{d},dt)$ $w_2\in A_{q-\beta_2}(\mathbb{R}^{d},dx)$. We choose $\widehat{p},\lambda\in(1,\infty)$ satisfying $\widehat{p}\lambda=\min\left\{\frac{p}{p-\beta_1},\frac{q}{q-\beta_2}\right\}>1$. Hence, we obtain $p-\beta_1<\frac{p}{\widehat{p}\lambda}<\frac{p}{\widehat{p}}$ and $q-\beta_2<\frac{q}{\widehat{p}\lambda}<\frac{q}{\widehat{p}}$.
By the nature of $A_p$ weight, it yeilds
$$w_1\in A_{p-\beta_1}\subset A_{\frac{p}{\widehat{p}\lambda}}\subset A_{\frac{p}{\widehat{p}}}(\mathbb{R}^{d},dt)\quad \text{and}\quad
w_2\in A_{q-\beta_2}\subset A_{\frac{q}{\widehat{p}\lambda}}\subset A_{\frac{q}{\widehat{p}}}(\mathbb{R}^{d},dx).$$  
By \cite[Lemma 5.10]{dong}, it follows that $u\in\mathbb{Z}^{\alpha,2}_{\widehat{p}\lambda,0,\text{loc}}\big( \mathfrak{R}^d\big)$  in view of  $u\in\mathbb{Z}_{p,q,w,0}^{\alpha,2}\left( \mathfrak{R}^d\right)$.

If $R\geq 2^{-\frac{\alpha}{2}}\zeta_{0}$, by H\"{o}lder's inequality, we obtain
\begin{equation}\label{e3.4}
\begin{aligned}
&(|D^{2}u-(D^{2}u)_{(t^{\ast}-(\rho R)^{\frac{2}{\alpha}},t^{\ast})\times B_{\rho R}(x^{\ast})}|)_{(t^{\ast}-(\rho R)^{\frac{2}{\alpha}},t^{\ast})\times B_{\rho R}(x^{\ast})}\\
\leq&2(|D^{2}u|)_{(t^{\ast}-(\rho R)^{\frac{2}{\alpha}},t^{\ast})\times B_{\rho R}(x^{\ast})}\\
\leq&2(|D^{2}u|^{\widehat{p}})_{(t^{\ast}-(\rho R)^{\frac{2}{\alpha}},t^{\ast})\times B_{\rho R}(x^{\ast})}^{\frac{1}{\widehat{p}}}\left(\frac{|B_{\zeta_{0}}|}{|B_{\rho R}|}\right)^{\frac{1}{\widehat{q}}}\\
\lesssim&\rho^{-\frac{d}{\widehat{q}}}(|D^{2}u|^{\widehat{p}})_{(t^{\ast}-(\rho R)^{\frac{2}{\alpha}},t^{\ast})\times B_{\rho R}(x^{\ast})}^{\frac{1}{\widehat{p}}}.
\end{aligned}
\end{equation}
For $R< 2^{-\frac{\alpha}{2}}\zeta_{0}$, let
$$-\partial_t^\alpha u+\overline{a}^{ij}D_{ij}u=\overline{f},$$
where $\overline{f}:=f+(\overline{a}^{ij}-a^{ij})D_{ij}u$ and
\begin{equation*}
\overline{a}^{ij}=\fint_{Q_{2R}(t^{\ast},x^{\ast})}a^{ij} dtdz.
\end{equation*}
From Lemma \ref{lem3.1}, we get
\begin{equation*}
\begin{aligned}
(|D^{2}u-(D^{2}u)_{Q_{\rho R}(t^{\ast},x^{\ast})}|)_{Q_{\rho R}(t^{\ast},x^{\ast})}
\lesssim&\rho^\beta\sum_{k=0}^{\infty}2^{-k\alpha}\left(|D^2 u|^{\widehat{p}}\right)_{(\widetilde{t},t^{\ast}) \times B_{\frac{R}{2}}(x^{\ast})}\\
&+(\rho^\beta+\rho^{\frac{-(d+\frac{2}{\alpha})}{\widehat{p}}})\sum_{k=0}^\infty C_k\left(|\overline{f}|^{\widehat{p}}\right)_{(\overline{t},t^{\ast})\times B_{R}(x^{\ast})}^{\frac{1}{\widehat{p}}}.
\end{aligned}
\end{equation*}
Then, by the H\"{o}lder inequality, we deduce
\begin{equation}\label{e3.5}
\begin{aligned}
\left(|\overline{f}|^{\widehat{p}}\right)_{\left(\overline{t},t^{\ast}\right)\times B_{R}(x^{\ast})}^{\frac{1}{\widehat{p}}}
\leq&\left(|f|^{\widehat{p}}\right)_{\left(\overline{t},t^{\ast}\right)\times B_{R}(x^{\ast})}^{\frac{1}{\widehat{p}}}\\
&+\left(|\overline{a}^{ij}-a^{ij}|^{\gamma \widehat{p}}\right)_{(\overline{t},t^{\ast})\times B_{R}(x^{\ast})}^{\frac{1}{\gamma\widehat{p}}}
 \left(|D^{2}\widehat{u}|^{\lambda \widehat{p}}\right)_{(\overline{t},t^{\ast})\times B_{R}(x^{\ast})}^{\frac{1}{\lambda \widehat{p}}}.
\end{aligned}
\end{equation}
Furthermore, it follows from Lemma \ref{lem2.1} that
\begin{align}\label{e3.6}
\left(|\overline{a}^{ij}-a^{ij}|\right)_{(\overline{t},t^{\ast})\times B_{R}(x^{\ast})}\lesssim (2^{k+2}-2)\vartheta. 
\end{align}
Combining \eqref{e3.5} with \eqref{e3.6}, we obtain
\begin{equation}\label{e3.7}
\begin{aligned}
&\sum_{k=0}^{\infty}C_{k}\left(|\overline{f}|^{\widehat{p}}\right)_{(\overline{t},t^{\ast})\times B_{R}(x^{\ast})}^{\frac{1}{\widehat{p}}}\\
\lesssim&\sum_{k=0}^{\infty}C_{k}\left(|f|^{\widehat{p}}\right)_{(\overline{t},t^{\ast})\times B_{R}(x^{\ast})}^{\frac{1}{\widehat{p}}}+ \vartheta^{\frac{1}{\gamma\widehat{p}}}\sum_{k=0}^{\infty}C_{k}2^{\frac{k+2}{\gamma \widehat{p}}}\left(|D^{2}u|^{\lambda \widehat{p}}\right)_{(\overline{t},t^{\ast})\times B_{R}(x^{\ast})}^{\frac{1}{\lambda \widehat{p}}}.
\end{aligned}
\end{equation}
Hence, from \eqref{e3.4} and \eqref{e3.7}, we deduce \eqref{e3.3}. The proof is completed.
\end{proof}

For each integer $m \in \mathbb{Z}$, choose an integer $k(m)$ satisfying
$$k(m) \leq \frac{2m}{\alpha}<k(m)+1.$$
Therefore, we get
$$
\mathbb{C}_{m}:=\left\{Q_{\vec{j}}^{m}=Q_{\left(j_{0}, \ldots, j_{d}\right)}^{m}: \vec{j}=\left(j_{0}, \ldots, j_{d}\right) \in \mathbb{Z}^{d+1}\right\},$$
 with $$
Q_{\vec{j}}^{m}=\left[\frac{j_{0}}{2^{k(m)}}+T, \frac{j_{0}+1}{2^{k(m)}}+T\right) \times\left[\frac{j_{1}}{2^{m}}, \frac{j_{1}+1}{2^{m}}\right) \times \cdots \times\left[\frac{j_{d}}{2^{m}}, \frac{j_{d}+1}{2^{m}}\right).$$
Moreover, we introduce the dyadic sharp function of  $h$  by
$$h_{dz}^{\#}(t^{\ast}, x^{\ast})=\sup _{m<\infty} \fint_{(t^{\ast}, x^{\ast}) \in Q_{\vec{j}}^{m}}\left|h(s, z)-h_{\mid m}(t^{\ast}, x^{\ast})\right| d z d s,$$
where
$$h_{\mid m}(t^{\ast}, x^{\ast})=\fint_{Q_{\vec{j}}^{m}} h(s, z) dzds.$$

\begin{lemma}\label{lem3.3}
Let $p, q\in$ $( 1, \infty )$, and $[w]_{p,q,\mathbb{R}^{d}}\leq \varrho$ for  $\varrho\in$ $[ 1, \infty )$. Under Assumption \ref{assump2.2}, for any $u\in \widetilde{\mathbb{Z}} _{p, q, w, 0}^{\alpha, 2}\left(\mathfrak{R}^d\right )$ with compact support in $(0, T) \times B_{\zeta_{0}}$ satisfying \eqref{e3.2} in $\mathfrak{R}^d$,  there holds
\begin{equation}\label{e3.8}
\|\partial_t^\alpha u\|_{L_{p,q,w}(\mathfrak{R}^d)}+\|D^2u\|_{L_{p,q,w}(\mathfrak{R}^d)}\lesssim \|f\|_{L_{p,q,w}(\mathfrak{R}^d)},
\end{equation}
where the hidden constant is dependent of $d, \tau, \alpha, p, q$ and $\varrho$.
\end{lemma}
\begin{proof}
For any $(t^{\ast}, x^{\ast}) \in \mathfrak{R}^d$, let $t^{\prime}=\min(T,t^{\ast}+\frac{\widehat{R}}{2})\in(-\infty,T]$ with
 $\widehat{R}=R^{\frac{2}{\alpha}}$,
$Q_{\vec{j}}^m\subset(t^{\prime}-\widehat{R},t^{\prime})\times B_{R}(x^{\ast})$, and $|(t^{\prime}-\widehat{R},t^{\prime})\times B_{R}(x^{\ast})|\lesssim |Q_{\vec{j}}^m|$.

Hence, by \eqref{e3.3}, we obtain
\begin{equation*}
\begin{aligned}
&\fint_{Q_{\vec{j}}^{n}\ni(t^{\ast},x^{\ast})}|D^2u(s,z)-(D^2u)_{|m}(t^{\ast},x^{\ast})| dz ds\\ 
\leq&(|D^2u-(D^2u)_{(t^{\prime}-\widehat{R},t^{\prime})\times B_{R}(x^{\ast})}|)_{(t^{\prime}-\widehat{R},t^{\prime})\times B_{R}(x^{\ast})}\\
\lesssim&\rho^{-\frac{d}{\widehat{q}}}(|D^{2}u|^{\widehat{p}})_{(t^{\prime}-\widehat{R},t^{\ast})\times B_{R}(x^{\ast})}^{\frac{1}{\widehat{p}}}+\rho^\beta\sum_{k=0}^{\infty}2^{-k\alpha}\left(|D^2u|^{\widehat{p}}\right)^{\frac{1}{\widehat{p}}}_{\left(t^{\prime}-2^{k}(\frac{R}{2\rho})^{\frac{2}{\alpha}},t^{\prime}\right) \times B_{\frac{R}{2\rho}}(x^{\ast})}\\
&+(\rho^\beta+\rho^{\frac{-(d+\frac{2}{\alpha})}{\widehat{p}}})\sum_{k=0}^{\infty}C_{k}\left(|f|^{\widehat{p}}\right)_{\left(t^{\prime}-(2^{k+2}-2)\left({\frac{R}{\rho}}\right)^{\frac{2}{\alpha}}, t^{\prime}\right)\times B_{\frac{R}{\rho}}(x^{\ast})}^{\frac{1}{\widehat{p}}}\\
&+(\rho^\beta+\rho^{\frac{-(d+\frac{2}{\alpha})}{\widehat{p}}})\vartheta^{\frac{1}{\gamma\widehat{p}}}\sum_{k=0}^{\infty}C_{k}2^{\frac{k+2}{\gamma\widehat{p}}}\left(|D^{2}u|^{\lambda \widehat{p}}\right)_{\left(t^{\prime}-(2^{k+2}-2)\left({\frac{R}{\rho}}\right)^{\frac{2}{\alpha}}, t^{\prime}\right)\times B_{\frac{R}{\rho}}(x^{\ast})}^{\frac{1}{\lambda \widehat{p}}}.
\end{aligned}
\end{equation*}
Furthermore, it follows from Lemma \ref{lem3.2} and the definition of the dyadic sharp function  that
\begin{equation*}
\begin{aligned}
&(D^2u)_{dz}^\#(t^{\ast},x^{\ast})\\
\lesssim &\left(\rho^{-\frac{d}{\widehat{q}}}+\rho^{\beta}\right)(\widehat{\mathcal{SM}}|D^{2}u|^{\widehat{p}})^{\frac{1}{\widehat{p}}}(t^{\ast},x^{\ast})+\left(\rho^{\beta}+\rho^{\frac{-(d+\frac{2}{\alpha})}{\widehat{p}}}\right)\vartheta ^\frac{1}{\gamma\widehat{p}}(\widehat{\mathcal{SM}}|D^{2}u|^{\lambda \widehat{p}})^{\frac{1}{\lambda\widehat{p}}}(t^{\ast},x^{\ast})\\
\quad&+\left(\rho^{\beta}+\rho^{\frac{-(d+\frac{2}{\alpha})}{\widehat{p}}}\right)(\widehat{\mathcal{SM}}|f|^{\widehat{p}})^{\frac{1}{\widehat{p}}}(t^{\ast},x^{\ast}).
\end{aligned}
\end{equation*}
Therefore, based on the weighted sharp function theorem, i.e, $\|h\|_{p,q,w}\lesssim \|h_{{\mathrm{dz}}}^{\#}\|_{p,q,w},$
and the weighted maximal function theorem for strong maximal functions, i.e, $\|\mathcal{SM}h\|_{p,q,w}\lesssim \|h\|_{p,q,w},$
it is clear that there exists a sufficiently small $\rho \in (0,\frac{1}{4})$ satisfying
\begin{equation*}
\begin{aligned}
\|D^{2}u\|_{p,q,w}&\lesssim (\rho^{-\frac{d}{\widehat{q}}}+2\rho^\beta+\rho^{\frac{-(d+\frac{2}{\alpha})}{\widehat{p}}}\vartheta^\frac{1}{\gamma\widehat{p}})\|D^2u\|_{p,q,w}
 +(\rho^\beta+\rho^{\frac{-(d+\frac{2}{\alpha})}{\widehat{p}}})\|f\|_{p,q,w}
\\&\lesssim \frac{1}{2}\|D^{2}u\|_{p,q,w}+\|f\|_{p,q,w}.
\end{aligned}
\end{equation*}

Eventually, we obtain
\begin{equation}\label{e3.9}
\|D^2u\|_{p,q,w}\lesssim \|f\|_{p,q,w}.
\end{equation}
Hence, by the fact
$-\partial_t^\alpha u=-a^{ij}(t,x)D_{ij}u+f$ in $\mathfrak{R}^d$,
we deduce \eqref{e3.8} from \eqref{e3.9}. The proof is completed.
\end{proof}

\begin{lemma}\label{lem3.4}
Let $p\in$ $(1, \infty )$,  $[w]_{p,q,\mathbb{R}^{d}}\leq \varrho$ for $\varrho\in$ $[ 1, \infty )$. Under Assumption \ref{assump2.2}, for any
$u\in \widetilde{\mathbb{Z}}_{p, q, w, 0}^{\alpha, 2}\left (\mathfrak{R}^d\right )$ satisfying
\begin{equation*}
-\partial_t^\alpha u+a^{ij}D_{ij}u+b^iD_{i}u+cu=f \qquad\text{in}\quad\mathfrak{R}^d,
\end{equation*}
there holds
\begin{align}\label{e3.10}
\|\partial_t^\alpha u\|_{p,q,w}+\|D^2u\|_{p,q,w}\lesssim \|f\|_{p,q,w}+\|u\|_{p,q,w}.
\end{align}
Furthermore, we deduce
\begin{align}\label{e3.11}
 \|I^{\alpha}_tf\|_{ p,q,w}\lesssim T^{\alpha}\|f\|_{ p,q,w}
\end{align}
and 
\begin{align}\label{e3.12}
\|u\|_{p,q,w}\lesssim T^\alpha\|\partial_t^\alpha u\|_{p,q,w},
\end{align}
where the hidden constant is dependent of $d, \tau, \alpha, p, q, \varrho, \zeta_{0}$ and $\|\cdot\|_{p, q, w}=\|\cdot\|_{L_{p,q,w}(\mathfrak{R}^d)}$.
\end{lemma}
\begin{proof}
First, we consider $p=q$ and $b^i=c=0$. By using a partition of unity argument in the spatial, we set $u_l(t,x)=u(t,x)\phi_l(x)$, where $\phi_l(x) \geq 0$, $\phi_l(x) \in C_0^\infty (\mathbb{R}^{d})$, $supp(\phi_l(x)) \subset B_{\frac{\zeta_{0}}{\sqrt{2}}}(x_l)$ and
$$1\leq\sum_{l=1}^{\infty}\left|\phi_{l}(x)\right|^{p} \leq C_{0}, \quad \sum_{l=1}^{\infty}\left| D_{x} \phi_{l}(x)\right|^{p} \leq C_{1},$$
where $C_{0}$ is dependent of $p$ and $C_{1}$ is dependent of $d, \alpha, p, \zeta_{0}$. Then, $u_l$ satisfies
\begin{equation*}
\begin{aligned}
-\partial_{t}^{\alpha} u_{l} +a^{i j} D_{i,j} u_{l}& =-\left(\partial_{t}^{\alpha} u\right) \phi_{l}+\left(a^{i j} D_{i,j}u\right)\phi_{l}+a^{ij} D_{i}u D_{j} \phi_{l}+2a^{ij}uD_{i,j}\phi_{l}\\
& =f\phi_{l}+2a^{ij} D_{i}u D_{j} \phi_{l}+a^{ij}uD_{i,j}\phi_{l}.
\end{aligned}    
\end{equation*}

It follows from Lemma \ref{lem3.3} that
\begin{align*}
\|\partial_t^\alpha u_l\|_{p,w}+\|D^2u_l\|_{p,w}
&\lesssim \|f\|_{p,w}+\|Du\|_{p,w}+\|u\|_{p,w}.
\end{align*}
Combining with the similar proof of \cite[Corollary 4.3]{dong2} for $\alpha \in (1,2)$ and  the interpolation inequality
$$\int_{\mathbb{R}^{d}}|Du|^pwdx\lesssim\rho^p\int_{\mathbb{R}^{d}}\left|D^2u|^pw\right.dx+ \rho^{-p}\int_{\mathbb{R}^{d}}\left|u|^pw\right.dx,$$ it yields \eqref{e3.10}. And the same result is obtained by the extrapolation theorem (see, e.g., \cite[Theorem 2.5]{dong}) for the case of $p\neq q$. 

Note that the $L_{p,q}$-estimates in \cite[Lemma 5.5]{dong1}, we obtain \eqref{e3.11}. Moreover, by the proof of \cite[Lemma 5.6]{dong1} and \eqref{e3.11}, set $u=I^\alpha_t\partial_t^\alpha u$, we deduce
$$\|u\|_{p,q,w}=\|I^\alpha_t\partial_t^\alpha u\|_{p,q,w}\lesssim T^\alpha\|\partial_t^\alpha u\|_{p,q,w}.$$ Therefore, \eqref{e3.12} holds. The proof is completed.
\end{proof}

\begin{lemma} \label{lem3.5}
Let $p, q\in ( 1, \infty )$, and $[w]_{p,q}\leq \varrho$ for $\varrho\in [ 1, \infty )$. Under Assumption \ref{assump2.2}, for $\vartheta \in (0, 1)$ and  $f \in L_{p,q,w}\left(\mathfrak{R}^{d}\right)$, then there is a unique solution $u \in\mathbb{Z}_{p, q, w, 0}^{\alpha , 2}\left ( \mathfrak{R}^d\right )$ satisfying
\begin{equation}\label{e3.13}
\partial_t^\alpha u-a^{ij}(t,x)D_{ij}u-b^i(t,x)D_iu-c(t,x)=f \qquad\text{in}\quad\mathfrak{R}^{d}.
\end{equation}
Furthermore, we get
\begin{equation}\label{e3.14}
\|\partial_t^\alpha u\|_{p,q,w}+\|u\|_{p,q,w}+\|Du\|_{p,q,w}+\|D^2u\|_{p,q,w}\lesssim\|f\|_{p,q,w},
\end{equation}
where the hidden constant depends on $d,\tau,\alpha,p,q,\varrho,\zeta_{0},T$ and $\|\cdot\|_{p, q, w}=\|\cdot\|_{L_{p,q,w}(\mathfrak{R}^d)}$.
\end{lemma}
\begin{proof}
 In order to prove \eqref{e3.14}, by Lemma \ref{lem3.4}, we need to verify
\begin{equation}\label{e3.15}
\|u\|_{p,q,w}\lesssim \|f\|_{p,q,w}.
\end{equation} 
Consider a positive integer $n$ and an integer $l$, set $s_{n,l}=\frac{lT}{n}$ with $-1\leq l \leq n-1$, one can choose smooth functions $\varphi_l(t)$ satisfying $\varphi_l(t)=0$ if $t\leq s_{n,l-1}$ and $\varphi_l(t)=1$ if $t\geq s_{n,l}$ with $|\varphi_l^{\prime}|\leq\frac{2n}{T}$ for $ 0\leq l\leq n-1$.

By Lemma \ref{lem3.2}, we obtain $u\varphi_l\in\mathbb{Z}_{p,q,w,0}^{\alpha,2}((s_{n,l-1},s_{n,l+1})\times\mathbb{R}^{d})$ satisfying
\begin{equation}\label{e3.16}
-\partial_{t}^{\alpha}(u\varphi_l)+a^{ij}D_{ij}(u\varphi_l)+b^{i}D_{i}(u\varphi_l)+c(u\varphi_l)=f\varphi_l+\overline{g}_{l},
\end{equation}
where for $ s_{n,l-1}\leq t\leq s_{n,l+1}$ and
\begin{align*}
\overline{g}_{l}(t,x)
&=\frac{\alpha}{\Gamma(1-\alpha)}\int_{-\infty}^t(t-s)^{-\alpha-1}\left(\varphi_l(t)-\varphi_l(s)\right)u(s,x)\chi_{s\leq s_{n,l}}ds.
\end{align*}
Then, for $l=0$, we have $\overline{g}_0(t,x)=0$. For $1\leq l \leq n-1$, it follows from $|\varphi_l^{\prime}|\leq\frac{2n}{T}$ that
\begin{equation}\label{e3.17}
\begin{aligned}
\overline{g}_{l}(t,x)
&\leq\frac{2n}{T}\frac{\alpha}{\Gamma(1-\alpha)}\int_0^{t}(t-s)^{-\alpha}|u(s,x)|\chi_{s\leq s_{n,l}}ds.
\end{aligned}
\end{equation}
Hence, together with \eqref{e3.11} and \eqref{e3.17}, we deduce
\begin{align*}
\|\overline{g}_{l}\|_{p,q,w((s_{n,l-1},s_{n,l+1})\times\mathbb{R}^{d})}
&\leq \frac{2n\alpha}{T}\|I_{0}^{1-\alpha}u(t,x)\chi_{t\leq s_{n,l+1}}\|_{p,q,w((0,s_{n,l+1})\times\mathbb{R}^{d})}\\
&\leq\frac {2n\alpha}{T}\left(s_{n,l+1}\right)^{1-\alpha}\|u(t,x)\chi_{t\leq s_{n,l}}\|_{p,q,w((0,s_{n,l+1})\times\mathbb{R}^{d})}\\
&\leq T^{-\alpha}\|u\|_{p,q,w((0,s_{n,l})\times\mathbb{R}^{d})},
\end{align*}
where $s_{n,l+1}=\frac{(l+1)T}n$ and $1\leq l\leq n-1$. 

By \eqref{e3.10}, \eqref{e3.12}, \eqref{e3.13} and the last inequality, for $0 \leq l \leq n-1$, it yields
\begin{align*}
&\|u\|_{p,q,w((s_{n,l},s_{n,l+1})\times\mathbb{R}^{d})}\\
\leq&\|u\varphi_l\|_{p,q,w((s_{n,l-1},s_{n,l+1})\times\mathbb{R}^{d})}\\
\lesssim& \left(\frac{T}{n}\right)^{\alpha}\|\partial_{t}^{\alpha}(u\varphi_l)\|_{p,q,w((s_{n,l-1},s_{n,l+1})\times\mathbb{R}^{d})} \\
\lesssim &\|f\varphi_l\|_{p,q,w((s_{n,l-1},s_{n,l+1})\times\mathbb{R}^{d})}+n^{-\alpha}\|u\|_{p,q,w((0,s_{n,l})\times\mathbb{R}^{d})} +\left(\frac{T}{n}\right)^{\alpha}\|u\|_{p,q,w((s_{n,l-1},s_{n,l+1})\times\mathbb{R}^{d})}\\
\lesssim&\|f\|_{p,q,w((0,s_{n,l+1})\times\mathbb{R}^{d})}+\|u\|_{p,q,w((0,s_{n,l})\times\mathbb{R}^{d}))},
\end{align*}
with $n$ large enough. Therefore, by induction we derive \eqref{e2.4} from $\|u\|_{p,q,w((0,s_0)\times\mathbb{R}^{d})}=0$.

According to the result of \cite{han}, we get the existence of solutions for \eqref{e3.13}. The proof is completed.
\end{proof}

In the half space, let $\mathfrak{R}^d_{+} :=(0, T) \times \mathbb{R}^{d}_{+}$, $B_R^+(x^{\ast}):=B_R(x^{\ast})\cap\mathbb{R}^{d}_{+}$ and
$$
(\widehat{\mathcal{SM}}f)(t^{\ast}, x^{\ast})=\sup_{\begin{array}{c}(t^{\ast}, x^{\ast}) \in Q_{R_{1},R_{2}}(s,z)\cap\mathfrak{R}^d_{+}\\(s,z)\in\mathfrak{R}^d_{+}
\end{array}}\fint_{Q_{R_{1},R_{2}}(s,z)\cap\mathfrak{R}^d_{+}}|f(u,y)|\:dy\:du.
$$
Then, we define
$\widehat{w}(t,x)=w_{1}(t)\widehat{w_{2}}(x)$, for $\widehat{w_{2}}(x)=w_{2}(|x^{1}|,x^{\prime})$, that is, $\widehat{w_{2}}(x) \in A_{q}(\mathbb{R}^{d})$ is the even extension of $w_2\in A_{q}(\mathbb{R}^{d}_{+})$ in the $x^1$ direction.


\begin{lemma}\label{lem3.6}
Set $\widehat u(t,x):=u(t,|x^1|,x^{\prime})\mathrm{sgn}x^1$ for $u\in \widetilde{\mathbb{Z}}_{p, q, w, 0}^{\alpha, 2}(\mathfrak{R}^d_{+})$. Then $\widehat{u} \in\mathbb{Z}_{p,q,\widehat{w},0}^{\alpha,2}(\mathfrak{R}^d)$ and
\begin{equation}\label{e3.18}
\|u\|_{\mathbb{Z}_{p,q,w}^{\alpha,2}(\mathfrak{R}^d_{+})}\leq\|\widehat{u}\|_{\mathbb{Z}_{p,q,\widehat{w}}^{\alpha,2}(\mathfrak{R}^d)}\leq2\|u\|_{\mathbb{Z}_{p,q,w}^{\alpha,2}(\mathfrak{R}^d_{+})}.
\end{equation}
\end{lemma}
\begin{proof}
According to the result of \cite[Lemma 8.2.1]{kry} for $u \in W^{k}_{p}(\mathfrak{R}^d_{+})$,   \eqref{e3.18} follows on $w =1$. Combining with the definition of $\widehat u(t,x)$, we obtain \eqref{e3.18}. The proof is completed.
\end{proof}

\begin{lemma} \label{lem3.7}
Let $\widehat{p} \in(1, \infty)$ and $a^{ij}$ be constant. If $u \in \widetilde{\mathbb{Z}}_{\widehat{p}, 0,loc}^{\alpha, 2}\left( \mathfrak{R}^d\right)$  satisfies 
$$\partial_t^\alpha u-a^{ij}D_{ij}u=f\quad\text{in}\quad\mathfrak{R}^d_+.$$
Then, there exists $\rho \in(0,\frac{1}{4})$ satisfying
\begin{equation}\label{e3.19}
\begin{aligned}
&(|D^{2}u-(D^{2}u)_{(t^{\ast}-(\rho R)^{\frac{2}{\alpha}},t^{\ast})\times B^{+}_{\rho R}(x^{\ast})}|)_{(t^{\ast}-(\rho R)^{\frac{2}{\alpha}},t^{\ast})\times B^{+}_{\rho R}(x^{\ast})} \\
\lesssim&\rho^\beta\sum_{k=0}^{\infty}2^{-k\alpha}\left(|D^2 u|^{\widehat{p}}\right)_{(\widetilde{t},t^{\ast}) \times B^+_{\frac{R}{2}}(x^{\ast})}^{\frac{1}{\widehat{p}}}+(\rho^\beta+\rho^{\frac{-(d+\frac{2}{\alpha})}{\widehat{p}}})\sum_{k=0}^\infty C_k\left(|f|^{\widehat{p}}\right)_{(\overline{t},t^{\ast})\times B^+_{R}(x^{\ast})}^{\frac{1}{\widehat{p}}},
\end{aligned}
\end{equation}
with $\left(t^{\ast}, x^{\ast}\right) \in (0, T] \times \mathbb{R}^{d}_+$, where $\widetilde{t}=t^{\ast}-2^{k}(\frac{R}{2})^{\frac{2}{\alpha}}$, $\overline{t}=t^{\ast}-(2^{k+2}-2)R^{\frac{2}{\alpha}}$ and $\{C_k\}$ satisfies $\sum\limits_{k=0}^{\infty}c_{k}\leq N(d,\tau,\alpha,p)$. The hidden constant is only dependent of $d, \tau, \alpha,\widehat{p}$.
\end{lemma}
\begin{proof}
First, define the even and odd extension 
$$\widehat{a}^{11}(t,x)=a^{11}(t,|x^1|,x^{\prime}),\quad \widehat{a}^{1j}(t,x)=a^{1j}(t,|x^1|,x^{\prime})\mathrm{sgn}x^1,j\geq2,$$
$$\widehat{a}^{i1}(t,x)=a^{i1}(t,|x^1|,x^{\prime})\mathrm{sgn}x^1,i\geq2,\quad \widehat{a}^{ij}(t,x)=a^{ij}(t,|x^1|,x^{\prime}),i,j\geq2,$$
$$\widehat{u}(t,x)=u(t, |x^1|,x^{\prime})\mathrm{sgn}x^1,\quad \widehat{f}(t,x)=f(t, |x^1|,x^{\prime})\mathrm{sgn}x^1,$$ 
and $\widehat{u}$ satisfies
$$-\partial_t^\alpha\widehat{u}+\widehat{a}^{ij}D_{ij}\widehat{u}=\widehat{f}\quad\text{in}\quad\mathfrak{R}^d.$$
It follows from Lemma \ref{lem3.1} and Lemma \ref{lem3.6} that
\begin{equation}\label{e3.20}
\begin{aligned}
(|D^{2}\widehat{u}-(D^{2}\widehat{u})_{Q_{\rho R}(t^{\ast},x^{\ast})}|)_{Q_{\rho R}(t^{\ast},x^{\ast})}
\lesssim&\rho^\beta\sum_{k=0}^{\infty}2^{-k\alpha}\left(|D^2 \widehat{u}|^{\widehat{p}}\right)_{(\widetilde{t},t^{\ast}) \times B_{\frac{R}{2}}(x^{\ast})}^{\frac{1}{\widehat{p}}}\\
&+(\rho^\beta+\rho^{\frac{-(d+\frac{2}{\alpha})}{\widehat{p}}})\sum_{k=0}^\infty C_k\left(|\widehat{f}|^{\widehat{p}}\right)_{(\overline{t},t^{\ast})\times B_{R}(x^{\ast})}^{\frac{1}{\widehat{p}}}.
\end{aligned}
\end{equation}
Combining the \eqref{e3.20},
\begin{equation*}
\fint_{B_{R}\left(x^{\ast}\right)}|\widehat{f}| d x \leq \frac{1}{\left|B_{R}^{+}\right|} \fint_{B_{R}\left(x^{\ast}\right)}|\widehat{f}| d x \leq \frac{2}{\left|B_{R}^{+}\right|} \fint_{B_{R}^{+}\left(x^{\ast}\right)}|f| d x    
\end{equation*}
and
\begin{equation}\label{e3.21}
\begin{aligned}
&(|D^{2} u-\left(D^{2} u\right)_{(t^{\ast}-(\rho R)^{\frac{2}{\alpha}},t^{\ast})\times B^{+}_{\rho R}(x^{\ast})}|)_{(t^{\ast}-(\rho R)^{\frac{2}{\alpha}},t^{\ast})\times B^{+}_{\rho R}(x^{\ast})} \\
\leq &4(|D^{2}\widehat{u}-(D^{2}\widehat{u})_{Q_{\rho R}(t^{\ast},x^{\ast})}|)_{Q_{\rho R}(t^{\ast},x^{\ast})},
\end{aligned}
\end{equation}
we deduce \eqref{e3.19}. The proof is completed.
\end{proof}

\begin{lemma}\label{lem3.8}
Let $ \beta > 0$, $R\in (0, \infty )$, $\rho \in (0, \frac{1}{4})$, $p, q\in$ $( 1, \infty )$, $\varrho\in$ $[1, \infty )$ and $[w]_{p,q,\mathbb{R}^{d}_{+}}\leq \varrho$.
There is $ \widehat{p} \in (1, \infty )$ and $\lambda \in (1, \infty )$ satisfying
$\widehat{p}<\widehat{p}\lambda<\min\{p,q\}$. Under Assumption \ref{assump2.3}, if $u\in \widetilde{\mathbb{Z}}_{p, q, w, 0}^{\alpha, 2}\left (\mathfrak{R}^d_{+}\right )$ has compact support in $(0, T) \times B^{+}_{\zeta_{0}}$ and satisfies
\begin{equation}\label{e3.22}
-\partial_t^\alpha u+a^{ij}(t,x)D_{ij}u=f
\end{equation}
in $\mathfrak{R}^d_{+}$, then  there holds
\begin{equation}\label{e3.23}
\begin{aligned}
&(|D^{2}u-(D^{2}u)_{(t^{\ast}-(\rho R)^{\frac{2}{\alpha}},t^{\ast})\times B^{+}_{\rho R}(x^{\ast})}|)_{(t^{\ast}-(\rho R)^{\frac{2}{\alpha}},t^{\ast})\times B^{+}_{\rho R}(x^{\ast})} \\
\lesssim&\rho^{-\frac{d}{\widehat{q}}}(|D^{2}u|^{\widehat{p}})_{(t^{\ast}-(\rho R)^{\frac{2}{\alpha}},t^{\ast})\times B^{+}_{\rho R}(x^{\ast})}^{\frac{1}{\widehat{p}}}+
\rho^\beta\sum_{k=0}^{\infty}2^{-k\alpha}\left(|D^2u|^{\widehat{p}}\right)_{(\widetilde{t},t^{\ast}) \times B^{+}_{\frac{R}{2}}(x^{\ast})}\\
\quad&+(\rho^\beta+\rho^{\frac{-(d+\frac{2}{\alpha})}{\widehat{p}}})\sum_{k=0}^{\infty}C_{k}\left(|f|^{\widehat{p}}\right)_{(\overline{t},t^{\ast})\times B^{+}_{R}(x^{\ast})}^{\frac{1}{\widehat{p}}}\\
\quad&+(\rho^\beta+\rho^{\frac{-(d+\frac{2}{\alpha})}{\widehat{p}}})\vartheta^{\frac{1}{\gamma\widehat{p}}}\sum_{k=0}^{\infty}C_{k}2^{\frac{k+2}{\gamma\widehat{p}}}\left(|D^{2}u|^{\lambda \widehat{p}}\right)_{(\overline{t},t^{\ast})\times B^{+}_{R}(x^{\ast})}^{\frac{1}{\lambda \widehat{p}}},
\end{aligned}
\end{equation}
for $( t^{\ast}, x^{\ast}) \in (0, T] \times \mathbb{R}^{d}_{+}$, where $\gamma =\frac{\lambda}{\lambda-1}$, $\widehat{q}=\frac{\widehat{p}}{\widehat{p}-1}$, and the hidden constant is dependent of $d, \tau, \alpha, p, q$ and $\varrho$. And for $t \leq 0$, both $u$ and $f$ are equal to $0$.
\end{lemma}
\begin{proof}
It follows from Lemma \ref{lem3.2} that $\widehat{u}\in\mathbb{Z}^{\alpha,2}_{\widehat{p}\lambda,0,\text{loc}}\big( \mathfrak{R}^d\big)$  in view of  $\widehat{u}\in\mathbb{Z}_{p,q,w,0}^{\alpha,2}\left( \mathfrak{R}^d\right)$. According to \eqref{e3.4} and \eqref{e3.21}, if $R\geq 2^{-\frac{\alpha}{2}}\zeta_{0}$, by H\"{o}lder's inequality, we obtain
\begin{equation}\label{e3.24}
\begin{aligned}
&(|D^{2}u-(D^{2}u)_{(t^{\ast}-(\rho R)^{\frac{2}{\alpha}},t^{\ast})\times B^{+}_{\rho R}(x^{\ast})}|)_{(t^{\ast}-(\rho R)^{\frac{2}{\alpha}},t^{\ast})\times B^{+}_{\rho R}(x^{\ast})}\\
\lesssim&\rho^{-\frac{d}{\widehat{q}}}(|D^{2}u|^{\widehat{p}})_{(t^{\ast}-(\rho R)^{\frac{2}{\alpha}},t^{\ast})\times B^{+}_{\rho R}(x^{\ast})}^{\frac{1}{\widehat{p}}}.
\end{aligned}
\end{equation}

For $R< 2^{-\frac{\alpha}{2}}\zeta_{0}$, $$-\partial_t^\alpha \widehat{u}+\widetilde{a}^{ij}D_{ij}\widehat{u}=\widetilde{f},$$
where $\widetilde{f}:=\widehat{f}+(\widetilde{a}^{ij}-\widehat{a}^{ij})D_{ij}\widehat{u}$ and $\widetilde{a}^{ij}$ is the zero extension of $\overline{a}^{ij}$ in the whole space with
\begin{equation*}
\overline{a}^{ij}=\fint_{{Q_{2R}(t^{\ast},x^{\ast})\cap\mathfrak{R}^d_{+}}}a^{ij}d tdz.
\end{equation*}
By the H\"{o}lder inequality and \eqref{e3.5}, we deduce
\begin{equation}\label{e3.25}
\begin{aligned}
\left(|\widetilde{f}|^{\widehat{p}}\right)_{\left(\overline{t},t^{\ast}\right)\times B^{+}_{R}(x^{\ast})}^{\frac{1}{\widehat{p}}}
\leq&\left(|\widehat{f}|^{\widehat{p}}\right)_{\left(\overline{t},t^{\ast}\right)\times B^{+}_{R}(x^{\ast})}^{\frac{1}{\widehat{p}}}\\
&+\left(|\widetilde{a}^{ij}-\widehat{a}^{ij}|^{\gamma \widehat{p}}\right)_{(\overline{t},t^{\ast})\times B^{+}_{R}(x^{\ast})}^{\frac{1}{\gamma\widehat{p}}}
 \left(|D^{2}\widehat{u}|^{\lambda \widehat{p}}\right)_{(\overline{t},t^{\ast})\times B^{+}_{R}(x^{\ast})}^{\frac{1}{\lambda \widehat{p}}}.
\end{aligned}
\end{equation}
Combining with \eqref{e3.7} and \eqref{e3.25}, we deduce
\begin{equation}\label{e3.26}
\begin{aligned}
&\sum_{k=0}^{\infty}C_{k}\left(|\overline{f}|^{\widehat{p}}\right)_{(\overline{t},t^{\ast})\times B^{+}_{R}(x^{\ast})}^{\frac{1}{\widehat{p}}}\\
\lesssim&\sum_{k=0}^{\infty}C_{k}\left(|\widehat{f}|^{\widehat{p}}\right)_{(\overline{t},t^{\ast})\times B^{+}_{R}(x^{\ast})}^{\frac{1}{\widehat{p}}}+ \vartheta^{\frac{1}{\gamma\widehat{p}}}\sum_{k=0}^{\infty}C_{k}2^{\frac{k+2}{\gamma \widehat{p}}}\left(|D^{2}u|^{\lambda \widehat{p}}\right)_{(\overline{t},t^{\ast})\times B^{+}_{R}(x^{\ast})}^{\frac{1}{\lambda \widehat{p}}}.
\end{aligned}
\end{equation}
Therefore, by virtue of Lemma \ref{lem3.7}, \eqref{e3.23} follows from \eqref{e3.24} and \eqref{e3.26}. The proof is completed.
\end{proof}

\begin{lemma}\label{lem3.9}
Under Assumption \ref{assump2.3}, for any $u\in \widetilde{\mathbb{Z}} _{p, q, w, 0}^{\alpha, 2}\left(\mathfrak{R}^d_{+}\right )$ with compact support in $(0, T) \times B^{+}_{\zeta_{0}}$ satisfying \eqref{e3.22} in $\mathfrak{R}^d_{+}$,  there holds
\begin{equation}\label{e3.27}
\|\partial_t^\alpha u\|_{L_{p,q,w}(\mathfrak{R}^d_{+})}+\|D^2u\|_{L_{p,q,w}(\mathfrak{R}^d_{+})}\lesssim \|f\|_{L_{p,q,w}(\mathfrak{R}^d_{+})},
\end{equation}
where the hidden constant is dependent of $d, \tau, \alpha, p, q$ and $\varrho$.
\end{lemma}
\begin{proof}
Similar to Lemma \ref{lem3.3}, combining with  Lemma \ref{lem3.8}, we obtain
\begin{equation*}
\|D^2u\|_{p,q,w}\lesssim \|f\|_{p,q,w}.
\end{equation*}
Hence, by the fact
$-\partial_t^\alpha u=-a^{ij}(t,x)D_{ij}u+f$ in $\mathfrak{R}^d_{+}$,
we obtain \eqref{e3.27}. The proof is completed.
\end{proof}

\begin{lemma}\label{lem3.10}
Under Assumption \ref{assump2.3}, for any
$u\in \widetilde{\mathbb{Z}}_{p, q, w, 0}^{\alpha, 2}\left (\mathfrak{R}^d_{+}\right )$ satisfying
\begin{equation*}
-\partial_t^\alpha u+a^{ij}D_{ij}u+b^iD_{i}u+cu=f \qquad\text{in}\quad\mathfrak{R}^d_{+},
\end{equation*}
there holds
\begin{align}\label{e3.28}
\|\partial_t^\alpha u\|_{p,q,w}+\|D^2u\|_{p,q,w}\lesssim \|f\|_{p,q,w}+\|u\|_{p,q,w}.
\end{align}
Furthermore, we deduce
\begin{align}\label{e3.29}
 \|I^{\alpha}_tf\|_{ p,q,w}\lesssim T^{\alpha}\|f\|_{ p,q,w}
\end{align}
and 
\begin{align}\label{e3.30}
\|u\|_{p,q,w}\lesssim T^\alpha\|\partial_t^\alpha u\|_{p,q,w},
\end{align}
where the hidden constant is dependent of $d, \tau, \alpha, p, q, \varrho, \zeta_{0}$ and $\|\cdot\|_{p, q, w}=\|\cdot\|_{L_{p,q,w}(\mathfrak{R}^d_{+})}$.
\end{lemma}
\begin{proof}
Define
$$\widehat{b}^{1}(t,x)=b^{1}(t,|x^1|,x^{\prime})\mathrm{sgn}x^1,\quad \widehat{b}^{i}(t,x)=b^{i}(t,|x^1|,x^{\prime}),i\geq2,$$
$$\widehat{c}(t,x)=c(t,|x^1|,x^{\prime}).$$
Together with the extension of Lemma \ref{lem3.7} and \cite[Lemma 7.3]{dong0}, we have $\widehat{u}$ satisfies
$$-\partial_t^\alpha\widehat{u}+\widehat{a}^{ij}D_{ij}\widehat{u}=\widehat{f}-\widehat{b}^iD_i\widehat{u}-\widehat{c}\widehat{u}.$$


According to Lemma \ref{lem3.4} and Lemma \ref{lem3.9}, for $p=q$, we get
\begin{align*}
\|\partial_t^\alpha u\|_{p,w}+\|D^2u\|_{p,w}
&\leq \|\partial_t^\alpha \widehat{u}\|_{p,\widehat{w}}+\|D^2\widehat{u}\|_{p,\widehat{w}} \\
&\lesssim \|f\|_{p,w}+\|Du\|_{p,w}+\|u\|_{p,w}.
\end{align*}
It follows from the extrapolation theorem that \eqref{e3.28} holds for $p\neq q$.

Moreover, we deduce
$$\|u\|_{p,q,w}\lesssim\|I^\alpha_t\partial_t^\alpha \widehat{u}\|_{p,q,\widehat{w}}\lesssim T^\alpha\|\partial_t^\alpha u\|_{p,q,w}.$$ Therefore, \eqref{e3.30} holds. The proof is completed.
\end{proof}


\begin{proof}[\textbf{Proof of Theorem \ref{the2.1}}]
In order to prove \eqref{e2.4}, by \eqref{e3.28}, we need to verify
\begin{equation}\label{e3.31}
\|u\|_{p,q,w}\lesssim \|f\|_{p,q,w}.
\end{equation}
First, we extend  $\widehat{u},\widehat{f}$ to be zero for $t<0$. Consider  $a\leq0$, it means that $\widehat{u}$ satisfies the equation
$$-\partial_t^\alpha\widehat{u}+\widehat{a}^{ij}D_{ij}\widehat{u}+\widehat{b}^iD_i\widehat{f}+\widehat{c}\widehat{u}=\widehat{f}\quad\mathrm{in}\quad(a,T)\times\mathbb{R}^{d}.$$
Denote $\widehat{u\varphi_l}\in\mathbb{Z}_{p,q,\widehat{w},0}^{\alpha,2}((s_{n,l-1},s_{n,l+1})\times\mathbb{R}^{d})$  to be its extension with respect to $x^1$ direction. Similar to Lemma \ref{lem3.5}, we obtain $\widehat{u\varphi_l}$ satisfies
\begin{equation}\label{e3.32}
-\partial_{t}^{\alpha}(\widehat{u\varphi_l})+\widehat{a}^{ij}D_{ij}(\widehat{u\varphi_l})+\widehat{b}^{i}D_{i}(\widehat{u\varphi_l})+\widehat{c}(\widehat{u\varphi_l})=\widehat{f\varphi_l}+\widehat{g}_{l},
\end{equation}
where 
\begin{align*}
\widehat{g}_{l}(t,x)
&=\frac{\alpha}{\Gamma(1-\alpha)}\int_{-\infty}^t(t-s)^{-\alpha-1}\left(\varphi_l(t)-\varphi_l(s)\right)\widehat{u}(s,x)\chi_{s\leq s_{n,l}}ds.
\end{align*}
Hence, from \eqref{e3.17} and \eqref{e3.29}, we deduce
\begin{align*}
\|\widehat{g}_{l}\|_{p,q,\widehat{w}((s_{n,l-1},s_{n,l+1})\times\mathbb{R}^{d})}
&\leq \frac{2n\alpha}{T}\|I_{0}^{1-\alpha}\widehat{u}(t,x)\chi_{t\leq s_{n,l+1}}\|_{p,q,\widehat{w}((0,s_{n,l+1})\times\mathbb{R}^{d})}\\
&\leq\frac {2n\alpha}{T}\left(s_{n,l+1}\right)^{1-\alpha}\|\widehat{u}(t,x)\chi_{t\leq s_{n,l}}\|_{p,q,\widehat{w}((0,s_{n,l+1})\times\mathbb{R}^{d})}\\
&\leq T^{-\alpha}\|\widehat{u}\|_{p,q,\widehat{w}((0,s_{n,l})\times\mathbb{R}^{d})}.
\end{align*}
By \eqref{e3.30}, \eqref{e3.32} and the last inequality, 
it yields
\begin{align*}
&\|u\|_{p,q,w((s_{n,l},s_{n,l+1})\times\mathbb{R}^{d}_{+})}\\
\lesssim& \left(\frac{T}{n}\right)^{\alpha}\|\partial_{t}^{\alpha}(\widehat{u\varphi_l})\|_{p,q,\widehat{w}((s_{n,l-1},s_{n,l+1})\times\mathbb{R}^{d})} \\
\lesssim &\|\widehat{f\varphi_l}\|_{p,q,\widehat{w}((s_{n,l-1},s_{n,l+1})\times\mathbb{R}^{d})}+n^{-\alpha}\|\widehat{u}\|_{p,q,\widehat{w}((0,s_{n,l})\times\mathbb{R}^{d})} +\left(\frac{T}{n}\right)^{\alpha}\|\widehat{u}\|_{p,q,\widehat{w}((s_{n,l-1},s_{n,l+1})\times\mathbb{R}^{d})}\\
\lesssim&\|f\|_{p,q,w((0,s_{n,l+1})\times\mathbb{R}^{d}_{+})}+\|u\|_{p,q,w((0,s_{n,l})\times\mathbb{R}^{d}_{+}))},
\end{align*}
with  $n$ large enough. Therefore, by induction we derive \eqref{e2.4} from $\|u\|_{p,q,w((0,s_0)\times\mathbb{R}^{d}_{+})}=0$.

According to the result of Lemma \ref{lem3.5}, we get the existence of solutions for Eq. \eqref{e2.3}. The proof is completed.
\end{proof}

\section{Divergence form in the whole space }\label{sec4}

In this section, we consider divergence form equations  in the whole space and give the proof of Theorem \ref{the2.2}.

First, we introduce the mollification of $v(t,x)$ in the spatial variable as 
$$v^\epsilon(t,x)=\int_{\mathbb{R}^d}\varphi^\epsilon(x-y)v(t,y)\:dy,$$
where a smooth non-negative function $\varphi\in C_0^\infty(\mathbb{R}^d)$ satisfying supp$(\varphi)\subset B_1$ and $\displaystyle\int_{\mathbb{R}^d}\varphi=1$ with $\varphi^\epsilon(x)=\epsilon^{-d}\varphi(\frac{x}{\epsilon})$.

Furthermore, we have $v\in\mathcal{Z}_{p,q,w,0}^{\alpha,1}((a,T)\times\mathbb{R}^d)$ satisfies \eqref{e2.5}
and
\begin{equation*}
\begin{aligned}
&\int_{a}^{T}\int_{\mathbb{R}^d}I_{a}^{1-\alpha}v\partial_{t}\psi\:dxdt+\int_{a}^{T}\int_{\mathbb{R}^d}(-a^{ij}D_{j}vD_{i}\psi-a^{i}vD_{i}\psi+b^{i}D_{i}v\psi+cv\psi)\:dxdt\\
&=\int_{a}^{T}\int_{\mathbb{R}^d}(g\psi-f_{i}D_{i}\psi)\:dxdt,
\end{aligned}
\end{equation*}
where $f_{i},g\in L_{p,q,w}((a,T)\times\mathbb{R}^d)$ and $\psi\in C_0^\infty([a,T)\times\mathbb{R}^d)$.

\begin{lemma} \label{lem4.1}
Let $a^{ij}$ be constant, $R\in (0, \infty )$, $\rho \in (0, \frac{1}{4})$, $p, q\in$ $( 1, \infty )$, $\varrho\in$ $[1, \infty )$, and $[w]_{p,q}\leq \varrho$. Under Assumption \ref{assump2.2}, for 
$f=(f_1,\ldots,f_d)$, $f_i\in L_{p,q,w}(\mathfrak{R}^d)$, $i=1,\ldots,d$, there exists a unique solution $u \in \mathcal{Z}_{\widehat{p}, 0,loc}^{\alpha, 1}\left (\mathfrak{R}^d\right )$ of
\begin{equation}\label{eq4.1}
-\partial_t^\alpha u+D_i(a^{ij}D_{j}u)=D_if_i
\end{equation}
in $\mathfrak{R}^d$. Moreover, for $\rho \in(0,\frac{1}{4})$, we have
\begin{equation}\label{eq4.2}
\begin{aligned}
&(|Du-(Du)_{(t^{\ast}-(\rho R)^{\frac{2}{\alpha}},t^{\ast})\times B_{\tau\rho R}(x^{\ast})}|)_{(t^{\ast}-(\rho R)^{\frac{2}{\alpha}},t^{\ast})\times B_{\tau\rho R}(x^{\ast})} \\
\lesssim&\rho^\beta\sum_{k=0}^{\infty}2^{-k\alpha}\left(|D u|^{\widehat{p}}\right)_{(\widetilde{t},t^{\ast}) \times B_{\frac{\tau^{-1}R}{2}}(x^{\ast})}^{\frac{1}{\widehat{p}}}\\
&+(\rho^\beta+\rho^{\frac{-(d+\frac{2}{\alpha})}{\widehat{p}}})\sum_{k=0}^\infty C_k\left(|f|^{\widehat{p}}\right)_{(\overline{t},t^{\ast})\times B_{\tau^{-1}R}(x^{\ast})}^{\frac{1}{\widehat{p}}},
\end{aligned}
\end{equation}
with $\left(t^{\ast}, x^{\ast}\right) \in (0, T] \times \mathbb{R}^{d}$ and the hidden constant is only dependent of $d, \tau, \alpha,\widehat{p}$.
\end{lemma}
\begin{proof}
 First, if $a^{ij}=\tau$, by Lemma \ref{lem3.5}, there exist $v_{i} \in \mathbb{Z}_{p, q, w, 0}^{\alpha,2}\left(\mathfrak{R}^d\right)$  satisfying
\begin{equation}\label{eq4.3}
\partial_{t}^{\alpha} v_{i}-\Delta v_{i}=f_{i}    
\end{equation}
and
$$\left\|D v_{i}\right\|_{p, q, w}+\left\|D^{2} v_{i}\right\|_{p, q, w} \lesssim \left\|f_{i}\right\|_{p, q, w}.$$ 
Then, we get $u:=\sum\limits_{i=1}^{d} D_{i} v_{i} \in \mathcal{Z}_{p, q, w, 0}^{\alpha, 1}\left(\mathfrak{R}^d\right)$
is a solution of \eqref{eq4.3} and
$$\|u\|_{p, q, w}+\|D u\|_{p, q, w} \leq \sum_{i=1}^{d}\left\|D v_{i}\right\|_{p, q, w}+\sum_{i=1}^{d}\left\|D^{2} v_{i}\right\|_{p, q, w} \lesssim\|f\|_{p, q, w}.$$
Furthermore, we obtain
$$\left\|\partial_{t}^{\alpha} u\right\|_{\mathbb{Z}_{p, q, w}^{-1}\left(\mathfrak{R}^d\right)}+\|u\|_{p, q, w}+\|D u\|_{p, q, w} \lesssim \|f\|_{p, q, w}.
$$

To obtain the uniqueness of the solution, we assume that there exist two solutions $u_1$ and $u_2$ in $\mathcal{Z}_{p, q, w, 0}^{\alpha, 1}\left(\mathfrak{R}^d\right)$. Let $v=u_1 -u_2 \in \mathcal{Z}_{p, q, w, 0}^{\alpha, 1}\left(\mathfrak{R}^d\right)$ and  satisfy
$$\partial_{t}^{\alpha} v-\Delta v=0.$$
Then, by the definition of $v^{\varepsilon}\in \mathbb{Z}_{p, q, w, 0}^{\alpha, 2}\left(\mathfrak{R}^d\right)$, we get
$$I^{1-\alpha} v^{\varepsilon}(t, x)=\frac{1}{\Gamma(1-\alpha)} \int_{0}^{t}(t-s)^{-\alpha} v^{\varepsilon}(s, x) d s=\int_{\mathbb{R}^{d}} I^{1-\alpha} v(t, y) \varphi^{\varepsilon}(x-y) d y.$$
Thus, we deduce
\begin{align*}
&\int_{0}^{T} \int_{\mathbb{R}^{d}} I^{1-\alpha} v^{\varepsilon}(t, x) \Psi_{t}(t, x)dxdt+\int_{0}^{T} \int_{\mathbb{R}^{d}} D_{i} v^{\varepsilon} D_{i} \Psi dxdt \\
=&\int_{0}^{T} \int_{\mathbb{R}^{d}} I^{1-\alpha} v(t, y)\left(\int_{\mathbb{R}^{d}} \Psi_{t } \varphi^{\varepsilon}(x-y) d x\right) d y d t \\
&+\int_{0}^{T} \int_{\mathbb{R}^{d}} D_{i} v(t, y) \int_{\mathbb{R}^{d}} D_{i} \Phi(t, x)\varphi^{\varepsilon}(x-y) d x d y d t\\
=&\int_{0}^{T} \int_{\mathbb{R}^{d}}^{T} I^{1-\alpha} v \overline{\Psi}(t, x) dxdt+\int_{0}^{T} \int_{\mathbb{R}^{d}} D_{i} v D_{i} \overline{\Psi}=0,   
\end{align*}
where $\Psi \in C_{0}^{\infty}\left([0, T) \times \mathbb{R}^{d}\right) $ and $\overline{\Psi}(t, x):=\int_{\mathbb{R}^{d}} \Psi(t, y) \varphi^{\varepsilon}(y-x) d y$. 
Hence, we obtain $$\partial_{t}^{\alpha} v^{\varepsilon}-\Delta v^{\varepsilon}=0.$$
Together with the last equation and  Lemma \ref{lem3.5}, we have $v^{\varepsilon}=0$. Thus, we obtain $v=0$ as $\varepsilon \to 0$ and it follows from Lemma \ref{lem3.1} that \eqref{eq4.2} holds.

To handle general $a^{ij}$, let $\mathbf{B}=(a^{ij})$ be symmetric and positive definite. The, we obtain 
$\mathbf{B}^{\frac{1}{2}}$ is symmetric and invertible.
Set
$\overline{v}(t,z)=v(t,\mathbf{B}^{\frac{1}{2}}z)$ and  $z^{\ast}: = \mathbf{B}^{-\frac{1}{2}}(x^{\ast})$. And $\overline{v}(t,x)$ satisfies
$$\partial_t^\alpha\overline{v}-\Delta \overline{v}=\overline{f}\quad\mathrm{in}\quad\mathfrak{R}^d.$$
Then, we deduce
\begin{align*}
&(|D\overline{v}-(D\overline{v})_{Q_{\rho R}(t^{\ast},z^{\ast})}|)_{Q_{\rho R}(t^{\ast},z^{\ast})} \\
\lesssim&\rho^\beta\sum_{k=0}^{\infty}2^{-k\alpha}\left(|D\overline{v}|^{\widehat{p}}\right)_{(\widetilde{t},t^{\ast})\times B_{\frac{R}{2}}(z^{\ast})}+(\rho^\beta+\rho^{\frac{-(d+\frac{2}{\alpha})}{\widehat{p}}})\sum_{k=0}^\infty C_k\left(|\overline{f}|^{\widehat{p}}\right)_{(\overline{t},t^{\ast})\times B_{R}(z^{\ast})}^{\frac{1}{\widehat{p}}}.
\end{align*}
There exists $\tau \in (0,1)$ such that
$$B_{\tau R}(x^{\ast})\subset \mathbf{B}^{\frac{1}{2}}(B_R(z^{\ast}))\subset B_{\tau^{-1}R}(x^{\ast}).$$
Based on the result above, we deduce \eqref{eq4.2}. The proof is completed. 
\end{proof}

\begin{lemma}\label{lem4.2}
Under Assumption \ref{assump2.2}, there are $ \widehat{p} \in (1, \infty )$ and $\lambda \in (1, \infty )$ satisfying
$\widehat{p}<\widehat{p}\lambda<\min\{p,q\}$. If $u\in \mathcal{Z}_{p, q, w, 0}^{\alpha, 1}\left (\mathfrak{R}^d\right )$ has compact support in $(0, T) \times B_{\zeta_{0}}$ and satisfies
\begin{equation}\label{e4.1}
-\partial_t^\alpha u+D_i(a^{ij}(t,x)D_{j}u)=D_if_i
\end{equation}
in $\mathfrak{R}^d$, there holds
\begin{equation}\label{e4.2}
\begin{aligned}
&(|Du-(Du)_{(t^{\ast}-(\rho R)^{\frac{2}{\alpha}},t^{\ast})\times B_{\tau\rho R}(x^{\ast})}|)_{(t^{\ast}-(\rho R)^{\frac{2}{\alpha}},t^{\ast})\times B_{\tau\rho R}(x^{\ast})} \\
\lesssim&\rho^{-\frac{d}{\widehat{q}}}(|D^{2}u|^{\widehat{p}})_{(t^{\ast}-(\rho R)^{\frac{2}{\alpha}},t^{\ast})\times B_{\tau\rho R}(x^{\ast})}^{\frac{1}{\widehat{p}}}+
\rho^\beta\sum_{k=0}^{\infty}2^{-k\alpha}\left(|Du|^{\widehat{p}}\right)_{(\widetilde{t},t^{\ast}) \times B_{\tau^{-1}\frac{R}{2}}(x^{\ast})}\\
&+(\rho^\beta+\rho^{\frac{-(d+\frac{2}{\alpha})}{\widehat{p}}})\sum_{k=0}^{\infty}C_{k}\left(|f|^{\widehat{p}}\right)_{(\overline{t},t^{\ast})\times B_{\tau^{-1}R}(x^{\ast})}^{\frac{1}{\widehat{p}}}\\
\quad&+(\rho^\beta+\rho^{\frac{-(d+\frac{2}{\alpha})}{\widehat{p}}})\vartheta^{\frac{1}{\gamma\widehat{p}}}\sum_{k=0}^{\infty}C_{k}2^{\frac{k+2}{\gamma\widehat{p}}}\left(|Du|^{\lambda \widehat{p}}\right)_{(\overline{t},t^{\ast})\times B_{\tau^{-1}R}(x^{\ast})}^{\frac{1}{\lambda \widehat{p}}},
\end{aligned}
\end{equation}
where $\gamma =\frac{\lambda}{\lambda-1}$, $\widehat{q}=\frac{\widehat{p}}{\widehat{p}-1}$, $( t^{\ast}, x^{\ast}) \in (0, T] \times \mathbb{R}^{d}$. For $t \leq 0$, both $u$ and $f$ are equal to $0$.

Furthermore, we obtain
\begin{equation}\label{e4.3}
\|Du\|_{L_{p,q,w}(\mathfrak{R}^d)}\lesssim \|f\|_{L_{p,q,w}(\mathfrak{R}^d)},
\end{equation}
where the hidden constant is dependent of $d, \tau, \alpha, p, q$ and $\varrho$.
\end{lemma}
\begin{proof}
 For $ \widehat{p} \in (1, \infty )$ and $\lambda \in (1, \infty )$ satisfying
$\widehat{p}<\widehat{p}\lambda<\min\{p,q\}$, if $u\in\mathcal{Z}_{p,q,w,0}^{\alpha,1}\left( \mathfrak{R}^d\right)$, we deduce  $u\in\mathcal{Z}^{\alpha,1}_{\widehat{p}\lambda,0,\text{loc}}\big( \mathfrak{R}^d\big)$. 
According to the proof of Lemma \ref{lem3.2} and Lemma \ref{lem4.1}, the estimate \eqref{e4.2} follows. Hence,  from the definition of the dyadic sharp function and Lemma \ref{lem3.3}, it follows from \eqref{e3.9} that \eqref{e4.3} holds. The proof is completed.
\end{proof}

\begin{lemma}\label{lem4.3}
Let $R\in (0, \infty )$, $p, q\in$ $( 1, \infty )$, $\varrho\in$ $[1, \infty )$, and $[w]_{p,q}\leq \varrho$.
There are constants $\sigma_0$ such that $\sigma\geq\sigma_0$. Under Assumption \ref{assump2.2}, for $f_i, g \in L_{p,q,w}(\mathfrak{R}^d)$,
$f=(f_1,\ldots,f_d)$, $i=1,\ldots,d$,
if $u\in \mathcal{Z}_{p, q, w, 0}^{\alpha, 1}\left (\mathfrak{R}^d\right )$ has compact support in $(0, T) \times B_{\frac{\zeta_{0}}{\sqrt2}}$ and satisfies
\begin{equation}\label{e4.4}
-\partial_t^\alpha u+D_i(a^{ij}(t,x)D_{j}u)- \sigma u=D_if_i + g
\end{equation}
in $\mathfrak{R}^d$, then 
\begin{equation}\label{e4.5}
\sigma\|u\|_{L_{p,q,w}(\mathfrak{R}^d)}+\sqrt{\sigma}\|Du\|_{L_{p,q,w}(\mathfrak{R}^d)}\lesssim \sqrt{\sigma}\|f\|_{L_{p,q,w}(\mathfrak{R}^d)}+\|g\|_{L_{p,q,w}(\mathfrak{R}^d)},
\end{equation}
where the hidden constant is dependent of $d, \tau, \alpha, p, q$ and $\varrho$.
\end{lemma}
\begin{proof}
According to S. Agmon' idea, for $t\in(0,T)$, $x \in \mathbb{R}^d$, $z\in\mathbb{R}$, we set
$\widetilde{u}(t,x,z)=u(t,x)\phi(z)\cos(\sqrt{\sigma}z)$,
and $\phi$ is a $C_0^\infty(\mathbb{R})$ function with $\phi\not\equiv0$. Combining with 
\eqref{e4.3} and \eqref{e4.4}, by the proof of \cite[Lemma 5.5]{kr1}, we deduce \eqref{e4.5}. The proof is completed.
\end{proof}

\begin{lemma}\label{lem4.4}
Let $R\in (0, \infty )$, $p, q\in$ $( 1, \infty )$, $\varrho\in$ $[1, \infty )$, and $[w]_{p,q}\leq \varrho$.
Under Assumption \ref{assump2.2}, for $f_i, g \in L_{p,q,w}(\mathfrak{R}^d)$,
$f=(f_1,\ldots,f_d)$, $i=1,\ldots,d$, there are constants $\sigma_0$ such that $\sigma\geq\sigma_0$ and $u\in \mathcal{Z}_{p, q, w, 0}^{\alpha, 1}\left (\mathfrak{R}^d\right )$ satisfies
\begin{equation}\label{e4.6}
-\partial_t^\alpha u+D_i(a^{ij}D_{j}u + a^iu)+ b^iD_iu +cu - \sigma u=D_if_i + g
\end{equation}
in $\mathfrak{R}^d$, then we get
\begin{equation}\label{e4.7}
\sigma\|u\|_{p,q,w}+\sqrt{\sigma}\|Du\|_{p,q,w}\lesssim \sqrt{\sigma}\|f\|_{p,q,w}+\|g\|_{p,q,w}.
\end{equation}
Furthermore, if $u\in \mathcal{Z}_{p, q, w, 0}^{\alpha, 1}\left (\mathfrak{R}^d\right )$ satisfies
$$-\partial_{t}^{\alpha} u=D_{i} f_{i}+g $$
in $\mathfrak{R}^d$, then for any $\epsilon<1$,  we have 
\begin{equation}\label{e4.8}
\left\|\partial_{t}^{\alpha} u^{\epsilon}\right\|_{p, q, w}\lesssim \frac{1}{\epsilon}\|g\|_{p, q, w}+\|f\|_{p, q, w},
\end{equation}
where the hidden constant is dependent of $d, \tau, \alpha, p, q$ and $\varrho$,  $\|\cdot\|_{p, q, w}=\|\cdot\|_{L_{p,q,w}(\mathfrak{R}^d)}$.
\end{lemma}
\begin{proof}
First, we consider $p=q$ and $a^i=b^i=c=0$. By use a partition of unity argument in the spatial, we set $u_l(t,x)=u(t,x)\phi_l(x)$, where $\phi_l(x) \geq 0$, $\phi_l(x) \in C_0^\infty (\mathbb{R}^{d})$, $supp(\phi_l(x)) \subset B_{\frac{\zeta_{0}}{\sqrt{2}}}(x_l)$ and
$$1\leq\sum_{l=1}^{\infty}\left|\phi_{l}(x)\right|^{p} \leq C_{0}, \quad \sum_{l=1}^{\infty}\left| D_{x} \phi_{l}(x)\right|^{p} \leq C_{1},$$
where $C_{0}$ is dependent of $p$ and $C_{1}$ is dependent of $d, \alpha, p, \zeta_{0}$.
Then, $u_l$ satisfies
\begin{equation*}
\begin{aligned}
-\partial_{t}^{\alpha} u_{l} +D_{i}\left(a^{i j} D_{j} u_{l}\right)-\sigma u_{l} 
& =-\left(\partial_{t}^{\alpha} u\right) \phi_{l}+\left(D_{i}\left(a^{i j} \phi_{l} D_{j} u\right)+D_{i}\left(a^{i j} u D_{j} \phi_{l}\right)\right)-\sigma u \phi_{l} \\
& =D_{i}\left(f_{i} \phi_{l}+a^{i j} u D_{j} \phi_{l}\right)+g \phi_{l}-a^{i j} D_{i} \phi_{l} D_{j} u-f_{i} D_{i} \phi_{l}.
\end{aligned}    
\end{equation*}
It follows from Lemma \ref{lem4.3} that
\begin{equation*}
\begin{aligned}
\sigma\left\|u_{l}\right\|_{p, w}+\sqrt{\sigma}\left\|D u_{l}\right\|_{p, w}
\lesssim &\sqrt{\sigma}\left(\left\|\phi_{l} f\right\|_{p, w}+\left\|u D_{j} \phi_{l}\right\|_{p, w}\right)\\
&+\left(\left\|g \phi_{l}\right\|_{p, w}+\left\|D_{i} \phi_{l} D_{j} u\right\|_{p, w}+\left\|D_{i} \phi_{l} f_{i}\right\|_{p, w}\right),
\end{aligned}    
\end{equation*}
which implies that
\begin{equation*}
\begin{aligned}
\sigma^{p}\|u\|_{p, w}^{p}+\sigma^{\frac{p}{2}}\|D u\|_{p, w}^{p} 
\leq & C_{2}\sigma^{\frac{p}{2}}\|f\|_{p, w}^{p}+C_{3}\sigma^{\frac{p}{2}}\|u\|_{p, w}^{p}
\\&+C_{2}\|g\|_{p, w}^{p}+C_{3}\|D u\|_{p, w}^{p}+C_{3}\|f\|_{p, w}^{p}.
\end{aligned}    
\end{equation*}
It allows us to choose a sufficiently large $\sigma_0$ such that for $\sigma \geq \sigma_0$,
$$\sigma^{p}-C_3\sigma^{\frac{p}{2}}>\frac{\sigma^{p}}{2}\quad\mathrm{and}\quad \sigma^{\frac{p}{2}}-C_3>\frac{\sigma^{\frac{p}{2}}}{2}.$$
Hence, we have 
\begin{equation}\label{e4.9}
\sigma\|u\|_{p,w}+\sqrt{\sigma}\|Du\|_{p,w}\lesssim \sqrt{\sigma}\|f\|_{p,w}+\|g\|_{p,w}.
\end{equation}

If $a^i,b^i,c \neq 0$,  we obtain
$$-\partial_t^\alpha u+D_i(a^{ij}D_ju)-\sigma u=D_i(f_i-a^iu)+g-b^iD_iu-cu.$$
Similarly, we also obtain \eqref{e4.9} by choosing large $\sigma_0$ depending on $\tau^{-1}$. According to Lemma \ref{lem3.4}, \eqref{e4.7} follows from the extrapolation theorem for the case of $p\neq q$. 

In view of the proof of \cite[Lemma A.3.]{dong2}, we get \eqref{e4.8}.
The proof is completed.
\end{proof}

\begin{lemma}\label{lem4.5}
Let $p \in(1, \infty)$,  $-\infty<a<t^{\ast}<T<\infty$, and $u \in   \mathcal{Z}_{p, q, w, 0}^{\alpha, 1}((a, T) \times \Omega)$. Choose $\psi \in C^{\infty}(\mathbb{R})$, $\psi(t)=0$ with $t \leq t^{\ast}$ and $\left|\psi^{\prime}(t)\right| \leq C$ for $ t \in \mathbb{R}$, then we have  $\psi u \in \mathcal{Z}_{p, q, w, 0}^{\alpha, 1}\left(\left(t^{\ast}, T\right) \times \Omega\right)$  and
$$\partial_{t}^{\alpha}(\psi u)(t,x)=\partial_{t} I_{t^{\ast}}^{1-\alpha}(\psi u)(t,x)=\psi (t)\partial_{t} I_{a}^{1-\alpha} u(t,x)-H(t,x),$$
where
\begin{equation*}
H(t, x)= \frac{\alpha}{\Gamma(1-\alpha)} \int_{a}^{t}(t-s)^{-\alpha-1}\left(\psi(s)-\psi(t)\right) u(s, x) d s.
\end{equation*}
Moreover, we obtain
\begin{equation}\label{e4.10}
\|H\|_{L_{p,q,w}\left(\left(t^{\ast}, T\right) \times \Omega\right)} \lesssim\|u\|_{L_{p,q,w}((a, T) \times \Omega)}.
\end{equation}
\end{lemma}
\begin{proof}
First, we choose $f,g\in L_{p,q,w}((a,T)\times\Omega)$ and $u \in   \mathcal{Z}_{p, q, w, 0}^{\alpha, 1}((a, T) \times \Omega)$ satisfying
$$\partial_t^\alpha u=\partial_tI_a^{1-\alpha}u=\operatorname{div}F+G.$$   
It follows from the definition of mollification and the Fubini theorem that
$$\partial_{t}^{\alpha}u^{\epsilon}=\partial_{t}I_{a}^{1-\alpha}u^{\epsilon}=\mathrm{div}\:F^{\epsilon}+G^{\epsilon}.$$
Thus, we get $u^{\epsilon}\in\mathbb{Z}_{p,q,w,0}^{\alpha,2}((t_{0},T)\times\Omega)$.
It follows from \cite[Lemma 3.6]{dong4} that 
\begin{equation*}
\partial_{t}^{\alpha}(\psi u^\epsilon)(t,x)=\psi (t)\partial_{t} I_{a}^{1-\alpha} u^\epsilon(t,x)-\frac{\alpha}{\Gamma(1-\alpha)} \int_{a}^{t}(t-s)^{-\alpha-1}\left(\psi(s)-\psi(t)\right) u^\epsilon(s, x) ds.
\end{equation*}
Then, by using the dominated convergence theorem and $\epsilon\to 0$, we get
$$\int_{a}^{t}(t-s)^{-\alpha-1}(\psi(s)-\psi(t))u^{\epsilon}(s,x)\:ds \rightarrow \int_{a}^{t}(t-s)^{-\alpha-1}(\psi(s)-\psi(t))v(s,x)\:ds$$
in $L_{p,q,w}((a,T)\times\Omega)$, and
$$\partial_{t}I_{a}^{1-\alpha}u^{\epsilon}\to\partial_{t}I_{a}^{1-\alpha}u  \quad \mathrm{and}  \quad  \partial_{t}I_{t^{\ast}}^{1-\alpha}u^{\epsilon}\to\partial_{t}I_{t^{\ast}}^{1-\alpha}u  \quad\mathrm{in}\quad\mathbb{Z}_{p,q,w}^{-1}((a,T)\times\Omega).$$ 
Furthermore, by $\left|\psi^{\prime}(t)\right| \leq C$ and the proof of Theorem \ref{the2.1}, we obtain
\begin{align*}    
\|H\|_{L_{p,q,w}\left(\left(t^{\ast}, T\right) \times \Omega\right)} 
\leq& C  \Gamma(1-\alpha) \|I_{a}^{1-\alpha}u(t, x)\|_{L_{p,q,w}\left(\left(t^{\ast}, T\right) \times \Omega\right)}\\
\lesssim&\|u\|_{L_{p,q,w}((a, T) \times \Omega)}.
\end{align*}
Hence, we have \eqref{e4.10}. The proof is completed.
\end{proof}


\begin{proof}[\textbf{Proof of Theorem \ref{the2.2}}]
According to \eqref{e2.5}, we deduce
\begin{equation*}
-\partial_{t}^{\alpha} u+D_{i}\left(a^{i j} D_{j} u+a^{i} u\right)+b^{i} D_{i} u+c u - \sigma u=D_{i} f_{i}+g - \sigma u,\quad \text {in} \quad\mathfrak{R}^{d}.
\end{equation*}
It follows from Lemma \ref{lem4.4} that
\begin{equation*}
\sigma\|u\|_{p,q,w}+\sqrt{\sigma}\|Du\|_{p,q,w}\lesssim \sqrt{\sigma}\|f\|_{p,q,w}+\|g\|_{p,q,w}+\sigma\|u\|_{p,q,w},
\end{equation*}
for all $\sigma\geq \sigma_0$. Choosing $\sigma=\max\{1,\sigma_0\}$, we conclude
\begin{equation}\label{e4.11}
\|u\|_{p,q,w}+\|Du\|_{p,q,w}\lesssim \|f\|_{p,q,w}+\|g\|_{p,q,w}+\|u\|_{p,q,w}.
\end{equation}
Hence, in order to prove \eqref{e2.6}, we need to verify
\begin{equation}\label{e4.12}
\|u\|_{p,q,w}\lesssim\|f\|_{p,q,w}+\|g\|_{p,q,w}.
\end{equation}
Similar to Lemma \ref{e3.5}, set $s_{n,l}=\frac{lT}{n}$ with $-1\leq l \leq n-1$.  Choose smooth functions $\psi_l(t)$ satisfying $\psi_l(t)=0$ if $t\leq s_{n,l-1}$ and $\psi_l(t)=1$ if $t\geq s_{n,l}$ with  $|\varphi_l^{\prime}|\leq\frac{2n}{T}$ for $ 0\leq l\leq n-1$. 
By Lemma \ref{lem4.5} and \eqref{e2.5}, we get
$\psi_l u \in \mathcal{Z}_{p, q, w, 0}^{\alpha, 1}\left(\left(s_{n,l-1}, s_{n,l+1}\right) \times \mathbb{R}^{d}\right)$ and
\begin{equation}\label{e4.13}
\partial_{t}^{\alpha}(u\psi_{l})=D_{i}(a^{ij}D_{j}(u\psi_{l})+a^{i}(u\psi_{l})-f_{i}\psi_{l})+b^{i}D_{i}(u\psi_{l})+c(u\psi_{l})-g\psi_{l}-H(t,x),
\end{equation}
where $$H(t,x)=\frac{\alpha}{\Gamma(1-\alpha)}\int_{-\infty}^t(t-s)^{-\alpha-1}\left(\varphi_l(t)-\varphi_l(s)\right)u(s,x)\chi_{s\leq s_{n,l}}ds.$$
Thus, for $\epsilon <1$, together with \eqref{e4.8}, \eqref{e4.10} and \eqref{e4.11},  we obtain
\begin{align}\label{e4.14}
 \left\|\partial_{t}^{\alpha}(u\psi_{l})^{\epsilon}\right\|_{p,q,w(s_{n,l-1},s_{n,l+1})} \nonumber
\lesssim& \frac 1\epsilon\big(n\|f\|_{p,q,w(0,s_{n,l+1})}+n\|g\|_{p,q,w(0,s_{n,l+1})}+n\|u\|_{p,q,w(0,s_{n,l})}\\
& +\|u\|_{p,q,w(s_{n,l-1},s_{n,l+1})}\big)+nT^{-\alpha}\|u\|_{p,q,w(0,s_{n,l})},
\end{align}
where $\|\cdot\|_{p, q, w(s_{n,1}, s_{n,2})}=\|\cdot\|_{L_{p,q,w}\left((s_{n,1}, s_{n,2})\times\mathbb{R}^d\right)}$.

Next, in view of the triangle inequality, it yields
\begin{equation*} 
	\begin{aligned}
	 \|u\|_{p, q, w\left(s_{n,l}, s_{n,l+1}\right)}  
		\leq&\left\|\left(u \psi_{l}\right)^{\epsilon}-u \psi_{l}\right\|_{p, q, w\left(s_{n,l-1}, s_{n,l+1}\right)}+\left\|\left(u \psi_{l}\right)^{\epsilon}\right\|_{p, q, w\left(s_{n,l-1}, s_{n,l+1}\right)}. 
	\end{aligned}
\end{equation*}
The estimate \eqref{e4.14} implies
\begin{equation}\label{e4.15}
\begin{aligned}
 \|u\|_{p, q, w\left(s_{n,l}, s_{n,l+1}\right)}  
\lesssim & \epsilon\left\|D\left(u \psi_{l}\right)\right\|_{p, q, w\left(s_{n,l-1}, s_{n,l+1}\right)}+(\frac Tn)^\alpha\|\partial_t^\alpha(u\psi_{l})^\epsilon\|_{p,q,w(s_{n,l-1},s_{n,l+1})}\\
\lesssim & \|f\|_{p,q,w(0,s_{n,l+1})}+\|g\|_{p,q,w(0,s_{n,l+1})},
\end{aligned}
\end{equation}
with choosing  a large enough $m$ and a small enough $\epsilon$ in the last inequality. Therefore, it follows from induction that \eqref{e2.6} holds.
 
Furthermore, by Lemma \ref{lem3.5} for $\alpha \in (0,1)$, there exists a unique solution $u \in \mathbb{Z}_{p,q,w,0}^{\alpha,2}(\mathfrak{R}^d)$ for \eqref{e3.13}. Combining with the proof of \cite[Theorem 2.9]{dong1} for $\alpha \in (1,2)$, we deduce the existence of solutions for Eq. \eqref{e2.5}. The proof is completed.
\end{proof}

\section{Divergence form in the half space }\label{sec5}
First, we define
$\widehat{w}(t,x)=w_{1}(t)\widehat{w_{2}}(x)$, for $\widehat{w_{2}}(x)=w_{2}(|x^{1}|,x^{\prime})$, that is, $\widehat{w_{2}}(x) \in A_{q}(\mathbb{R}^{d})$ is the even extension of $w_2\in A_{q}(\mathbb{R}^{d}_{+})$ in the $x^1$ direction. 

\begin{lemma}\label{lem5.1}
Set $\widehat u(t,x):=u(t,|x^1|,x^{\prime})\mathrm{sgn}x^1$ for $u\in \widetilde{\mathcal{Z}}_{p, q, w, 0}^{\alpha, 1}(\mathfrak{R}^d_{+})$. Then, we deduce $\widehat{u} \in\mathcal{Z}_{p,q,\widehat{w},0}^{\alpha,1}(\mathfrak{R}^d)$ and
\begin{equation}\label{e5.1}
\|u\|_{\mathcal{Z}_{p,q,w}^{\alpha,1}(\mathfrak{R}^d_{+})}\leq\|\widehat{u}\|_{\mathcal{Z}_{p,q,\widehat{w}}^{\alpha,1}(\mathfrak{R}^d)}\leq2\|u\|_{\mathcal{Z}_{p,q,w}^{\alpha,1}(\mathfrak{R}^d_{+})}.
\end{equation}
\end{lemma}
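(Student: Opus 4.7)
The proof parallels Lemma \ref{lem3.3} but must additionally handle the $\mathbb{Z}^{-1}_{p,q,\widehat w}$ summand that is characteristic of the divergence-form space $\mathcal{Z}^{\alpha,1}$. I split the argument into the pointwise $L_{p,q,\widehat w}$ pieces and the distributional $\mathbb{Z}^{-1}_{p,q,\widehat w}$ piece, then verify the zero-subscript membership.

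For the $L_{p,q,\widehat w}$ contributions from $\widehat u$ and $D\widehat u$: a direct computation from $\widehat u(t,x)=u(t,|x^1|,x')\,\mathrm{sgn}\,x^1$ shows that $D_1\widehat u$ is the even extension in $x^1$ of $D_1 u$, while $D_j\widehat u$ for $j\ge 2$ is the odd extension of $D_j u$; in particular $|\widehat u(t,x)|=|u(t,|x^1|,x')|$ and $|D\widehat u(t,x)|=|Du(t,|x^1|,x')|$. Combined with $\widehat{w_2}$ being the even reflection of $w_2$, the substitution $y^1=|x^1|$ splits each inner spatial integral into two identical halves and yields $\|\widehat u\|_{L_{p,q,\widehat w}(\mathfrak R^d)}=2^{1/q}\|u\|_{L_{p,q,w}(\mathfrak R^d_+)}$, and similarly for $D\widehat u$; the lower bound $\|u\|_{L_{p,q,w}(\mathfrak R^d_+)}\le \|\widehat u\|_{L_{p,q,\widehat w}(\mathfrak R^d)}$ is immediate from restriction.

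For the $\mathbb{Z}^{-1}_{p,q,\widehat w}$ piece, I start with any admissible decomposition $-\partial_t^\alpha u=D_j f_j+g$ on $\mathfrak R^d_+$ and extend each coefficient to $\mathfrak R^d$ with the parity dictated by matching the right-hand side to $\partial_t^\alpha\widehat u$, which is odd in $x^1$: let $\widehat{f_1}$ be the even extension of $f_1$ in $x^1$, and let $\widehat{f_j}$ for $j\ge 2$ and $\widehat g$ be the odd extensions of $f_j$ and $g$. Away from $\{x^1=0\}$ the identity $-\partial_t^\alpha\widehat u=D_j\widehat{f_j}+\widehat g$ holds pointwise; to upgrade it to a distributional identity on $\mathfrak R^d$, I test against $\psi\in C_0^\infty([a,T)\times\mathbb R^d)$ and split $\psi=\psi_o+\psi_e$ into its odd and even parts in $x^1$. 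By parity, only $\psi_o$ contributes to each of the four integrals; for such a $\psi_o$ the restriction to $\mathfrak R^d_+$ is an admissible test function vanishing on $\{x^1=0\}$, so the original half-space identity \eqref{e2.2} applies, and folding the two symmetric integrals over $\{x^1>0\}$ and $\{x^1<0\}$ produces the desired whole-space identity. Taking the infimum over decompositions gives $\|\partial_t^\alpha\widehat u\|_{\mathbb Z^{-1}_{p,q,\widehat w}(\mathfrak R^d)}\le 2^{1/q}\|\partial_t^\alpha u\|_{\mathbb Z^{-1}_{p,q,w}(\mathfrak R^d_+)}$, and the reverse inequality follows by restricting any whole-space decomposition of $-\partial_t^\alpha\widehat u$ to $\mathfrak R^d_+$ via test functions compactly supported in the open half-space.

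Finally, the membership $\widehat u\in\mathcal Z^{\alpha,1}_{p,q,\widehat w,0}(\mathfrak R^d)$ is obtained by choosing approximants $u_n\in C^\infty([0,T]\times\overline{\mathbb R^d_+})$ vanishing at $t=0$, on $\{x^1=0\}$, and for large $|x|$, so that their odd extensions $\widehat{u_n}$ are smooth on $[0,T]\times\mathbb R^d$, vanish at $t=0$ and for large $|x|$, and converge to $\widehat u$ in $\mathcal Z^{\alpha,1}_{p,q,\widehat w}(\mathfrak R^d)$ by the estimates above applied to $u-u_n$. The main obstacle is the distributional step across $\{x^1=0\}$: because $\widehat{f_1}$ is the even extension it is continuous across the interface, so no singular surface term enters $D_1\widehat{f_1}$; the odd extensions of $\widehat u$, $\widehat{f_j}$ for $j\ge 2$, and $\widehat g$ likewise produce no boundary contribution thanks to the vanishing lateral trace of $u$ built into $\widetilde{\mathcal Z}^{\alpha,1}_{p,q,w,0}$.
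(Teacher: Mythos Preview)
Your argument is correct and follows the same odd-reflection strategy the paper has in mind; in fact the paper offers no separate proof of Lemma~\ref{lem5.1} beyond the sentence ``Similarly, from Lemma~\ref{lem3.3}, we obtain the following vital lemma,'' so your write-up is considerably more detailed than what appears there. In particular, your explicit treatment of the $\mathbb{Z}^{-1}_{p,q,\widehat w}$ summand---extending $f_1$ evenly and $f_j$ ($j\ge 2$), $g$ oddly, then splitting test functions into odd and even parts---is exactly the extra ingredient needed beyond Lemma~\ref{lem3.3}, and the paper simply leaves this implicit.

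One small technical point: when you pass to approximants $u_n$ and assert that their odd extensions $\widehat{u_n}$ are smooth on $[0,T]\times\mathbb{R}^d$, vanishing of $u_n$ on $\{x^1=0\}$ alone does not guarantee $C^\infty$ regularity of the odd reflection across that hyperplane (one also needs the even-order normal derivatives to vanish). This is easily repaired---e.g.\ by inserting a cutoff so that $u_n$ vanishes in a neighborhood of $\{x^1=0\}$, or by noting that $C^1$ regularity and membership in the appropriate space suffice for the $\mathcal{Z}^{\alpha,1}$ approximation---but it is worth a remark.
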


\begin{proof}
The proof is similar to Lemma \ref{lem3.6}.
\end{proof}

\begin{lemma} \label{lem5.2}
Let $a^{ij}$ be constant, $R\in (0, \infty )$, $\rho \in (0, \frac{1}{4})$, $p, q\in$ $( 1, \infty )$, $\varrho\in$ $[1, \infty )$, and $[w]_{p,q}\leq \varrho$. Under Assumption \ref{assump2.3}, for 
$f=(f_1,\ldots,f_d)$, $f_i\in L_{p,q,w}(\mathbb{R}^d)$, $i=1,\ldots,d$, if $u \in \mathcal{Z}_{\widehat{p}, 0,loc}^{\alpha, 1}\left (\mathfrak{R}^d_+\right )$  satisfies 
\begin{equation}\label{eq5.2}
-\partial_t^\alpha u+D_i(a^{ij}D_{j}u)=D_if_i
\end{equation}
in $\mathfrak{R}^d_+$. Then, there exists $\rho \in(0,\frac{1}{4})$ satisfying
\begin{equation}\label{eq5.3}
\begin{aligned}
&(|Du-(Du)_{(t^{\ast}-(\rho R)^{\frac{2}{\alpha}},t^{\ast})\times B^+_{\tau\rho R}(x^{\ast})}|)_{(t^{\ast}-(\rho R)^{\frac{2}{\alpha}},t^{\ast})\times B^+_{\tau\rho R}(x^{\ast})} \\
\lesssim&\rho^\beta\sum_{k=0}^{\infty}2^{-k\alpha}\left(|D u|^{\widehat{p}}\right)_{(\widetilde{t},t^{\ast}) \times B^+_{\frac{\tau^{-1}R}{2}}(x^{\ast})}^{\frac{1}{\widehat{p}}}\\
&+(\rho^\beta+\rho^{\frac{-(d+\frac{2}{\alpha})}{\widehat{p}}})\sum_{k=0}^\infty C_k\left(|f|^{\widehat{p}}\right)_{(\overline{t},t^{\ast})\times B^+_{\tau^{-1}R}(x^{\ast})}^{\frac{1}{\widehat{p}}},
\end{aligned}
\end{equation}
with $\left(t^{\ast}, x^{\ast}\right) \in (0, T] \times \mathbb{R}^{d}_+$ and the hidden constant is only dependent of $d, \tau, \alpha,\widehat{p}$.
\end{lemma}
\begin{proof}
Similarly to Lemma \ref{lem3.7}, for $f_i \in L_{p,q,w}(\mathfrak{R}^d_+)$, $f=(f_1,\ldots,f_d)$,
we have the extension 
$$\widehat{a}^{11}(t,x)=a^{11}(t,|x^1|,x^{\prime}),\quad \widehat{a}^{1j}(t,x)=a^{1j}(t,|x^1|,x^{\prime})\mathrm{sgn}x^1,j\geq2,$$
$$\widehat{a}^{i1}(t,x)=a^{i1}(t,|x^1|,x^{\prime})\mathrm{sgn}x^1,i\geq2,\quad \widehat{a}^{ij}(t,x)=a^{ij}(t,|x^1|,x^{\prime}),i,j\geq2,$$
$$\widehat{f_1}(t,x)=f_1(t,|x^1|,x^{\prime})\mathrm{sgn}x^1, \quad \widehat{f_i}(t,x)=f_i(t,|x^1|,x^{\prime}),i\geq2,$$ and $\widehat{u}(t,x)=u(t, |x^1|,x^{\prime})\mathrm{sgn}x^1$ satisfies
\begin{equation*}
-\partial_t^\alpha \widehat{u}+D_i(\widehat{a}^{ij}D_{j}\widehat{u})=D_i\widehat{f_i}.
\end{equation*}
It follows from  Lemma \ref{lem4.1} and  Lemma \ref{lem5.1} that \eqref{eq5.3} holds. The proof is completed.
\end{proof}

\begin{lemma}\label{lem5.2}
Let $R\in (0, \infty )$, $p, q\in$ $( 1, \infty )$, $\varrho\in$ $[1, \infty )$, and $[w]_{p,q}\leq \varrho$. Under Assumption \ref{assump2.3}, there are constants $\sigma_0$ such that $\sigma\geq\sigma_0$
and $u\in \mathcal{Z}_{p, q, w, 0}^{\alpha, 1}\left (\mathfrak{R}^d_+\right )$ satisfies
\begin{equation}\label{e5.2}
-\partial_t^\alpha u+D_i(a^{ij}D_{j}u + a^iu)+ b^iD_iu +cu - \sigma u=D_if_i + g
\end{equation}
in $\mathfrak{R}^d_+$, then 
\begin{equation}\label{e5.3}
\sigma\|u\|_{p,q,w}+\sqrt{\sigma}\|Du\|_{p,q,w}\lesssim \sqrt{\sigma}\|f\|_{p,q,w}+\|g\|_{p,q,w},
\end{equation}
where the hidden constant is dependent of $d, \tau, \alpha, p, q$ and $\varrho$, $\|\cdot\|_{p, q, w}=\|\cdot\|_{L_{p,q,w}(\mathfrak{R}^d_+)}$.
\end{lemma}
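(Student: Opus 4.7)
The plan is to reduce the half-space estimate to the whole-space estimate already established in Lemma \ref{lem4.3}, via an odd reflection across the hyperplane $\{x^1=0\}$ as suggested by Lemma \ref{lem5.1}. First I would define $\widehat{u}(t,x):=u(t,|x^1|,x')\mathrm{sgn}\,x^1$ together with the following parity-adapted extensions of the data: $\widehat{f}_1$ is the even extension and $\widehat{f}_i$ for $i\neq 1$ the odd extension; $\widehat{g}$ is the odd extension; the coefficients $\widehat{a}^{ij}$ are taken even when $i,j$ are both $1$ or both different from $1$, and odd otherwise; $\widehat{a}^1,\widehat{b}^1$ are odd extensions, $\widehat{a}^i,\widehat{b}^i$ ($i\neq 1$) and $\widehat{c}$ are even extensions. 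A short parity check term by term shows that with these choices each term of
\begin{equation*}
-\partial_t^\alpha \widehat{u}+D_i(\widehat{a}^{ij}D_j\widehat{u}+\widehat{a}^i\widehat{u})+\widehat{b}^iD_i\widehat{u}+\widehat{c}\,\widehat{u}-\sigma\widehat{u}=D_i\widehat{f}_i+\widehat{g}
\end{equation*}
is odd in $x^1$ on either side of $\{x^1=0\}$, and the identity holds pointwise in $\mathfrak{R}^d\setminus\{x^1=0\}$.

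Next I would promote this to a genuine weak identity on all of $\mathfrak{R}^d$. Since $u$ vanishes on $\partial_p((0,T)\times\mathbb{R}^d_+)$, the odd extension $\widehat{u}$ is continuous across $\{x^1=0\}$ and lies in $\mathcal{Z}_{p,q,\widehat{w},0}^{\alpha,1}(\mathfrak{R}^d)$ by Lemma \ref{lem5.1}. For any test function $\psi\in C_0^\infty([0,T)\times\mathbb{R}^d)$, splitting $\psi=\psi_++\psi_-$ into its even and odd parts in $x^1$ and pairing against the two halves of the equation (using the parity assignments to cancel all the boundary contributions at $x^1=0$) produces the divergence-form weak formulation \eqref{e2.2} on $\mathfrak{R}^d$. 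Simultaneously, from Assumption \ref{assump2.3} one verifies that $\widehat{a}^{ij}$ satisfies Assumption \ref{assump2.2} on $\mathbb{R}^{d+1}$ with parameter $C\vartheta$ (the factor $4$ built into Assumption \ref{assump2.3} is tailored exactly for this), and from the standard characterization of the even weight one has $[\widehat{w}]_{p,q}\leq C[w]_{p,q,\mathbb{R}^d_+}\leq C\varrho$.

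With the extended problem in hand, I would apply Lemma \ref{lem4.3} directly to obtain
\begin{equation*}
\sigma\|\widehat{u}\|_{L_{p,q,\widehat{w}}(\mathfrak{R}^d)}+\sqrt{\sigma}\|D\widehat{u}\|_{L_{p,q,\widehat{w}}(\mathfrak{R}^d)}\lesssim \sqrt{\sigma}\|\widehat{f}\|_{L_{p,q,\widehat{w}}(\mathfrak{R}^d)}+\|\widehat{g}\|_{L_{p,q,\widehat{w}}(\mathfrak{R}^d)},
\end{equation*}
valid for $\sigma\geq\sigma_0$ with $\sigma_0$ depending on $d,\tau,\alpha,p,q,\varrho$. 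Since the even extension $\widehat{w_2}$ is comparable to $w_2$ on $\mathbb{R}^d_+$, restricting to $\{x^1>0\}$ and invoking Lemma \ref{lem5.1} gives, up to a universal constant, $\|u\|_{p,q,w}\leq \|\widehat{u}\|_{L_{p,q,\widehat{w}}}\leq 2\|u\|_{p,q,w}$ and similarly for $Du$, while $\|\widehat{f}\|_{L_{p,q,\widehat{w}}}\lesssim \|f\|_{p,q,w}$ and $\|\widehat{g}\|_{L_{p,q,\widehat{w}}}\lesssim \|g\|_{p,q,w}$. Combining these estimates yields \eqref{e5.3}.

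I expect the main obstacle to be the second step, namely the verification that the reflected function $\widehat{u}$ truly solves the extended divergence-form equation in the distributional sense across the reflection hyperplane. Because the coefficients themselves are merely measurable (and only VMO in $x$), one cannot appeal to classical trace computations; the argument has to rely on the fact that $u\in\widetilde{\mathcal{Z}}_{p,q,w,0}^{\alpha,1}$ possesses a vanishing Dirichlet trace (via the approximating sequence defining the tilde-space), so that in the weak form the contributions from $\{x^1=0\}$ produced by the odd–even parity splitting cancel pairwise. Once this is established, the remainder is a bookkeeping exercise, and the desired inequality follows directly from Lemma \ref{lem4.3} combined with Lemma \ref{lem5.1}.
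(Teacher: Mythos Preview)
Your proposal is correct and follows essentially the same route as the paper: perform the odd reflection of $u$ together with the parity-adapted extensions of $f_i$, $g$ (and the coefficients), invoke Lemma~\ref{lem5.1} to land in $\mathcal{Z}_{p,q,\widehat{w},0}^{\alpha,1}(\mathfrak{R}^d)$, apply the whole-space estimate of Lemma~\ref{lem4.3}, and restrict back. The paper's own proof is terser---it does not spell out the coefficient extensions or the verification of the weak formulation across $\{x^1=0\}$---but the strategy and the chain of inequalities \eqref{e5.4} are identical to yours.
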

\begin{proof}
Define
$$\widehat{a}^{1}(t,x)=a^{1}(t,|x^1|,x^{\prime})\mathrm{sgn}x^1,\quad \widehat{a}^{i}(t,x)=a^{i}(t,|x^1|,x^{\prime}),i\geq2,$$
$$\widehat{b}^{1}(t,x)=b^{1}(t,|x^1|,x^{\prime})\mathrm{sgn}x^1,\quad \widehat{b}^{i}(t,x)=b^{i}(t,|x^1|,x^{\prime}),i\geq2,$$
$$\widehat{c}(t,x)=c(t,|x^1|,x^{\prime}),\quad \widehat{g}(t,x)=g(t,|x^1|,x^{\prime}),$$
let $\widehat{u} \in\mathcal{Z}_{p,q,\widehat{w},0}^{\alpha,1}(\mathfrak{R}^d)$ and $\widehat{f}, \widehat{g}\in L_{p,q,\widehat{w}}(\mathfrak{R}^d)$ satisfy
\begin{equation*}
-\partial_t^\alpha \widehat{u}+D_i(\widehat{a}^{ij}D_{j}\widehat{u} + \widehat{a}^i\widehat{u})+ \widehat{b}^iD_i\widehat{u} +\widehat{c}\widehat{u} - \sigma \widehat{u}=D_i\widehat{f_i} + \widehat{g}.
\end{equation*}
Similarly to Lemmas \ref{lem4.2}-\ref{lem4.4} and Lemma \ref{lem5.1}, we obtain 
\begin{equation}\label{e5.4}
\begin{aligned}
\sigma\|u\|_{L_{p,q,w}(\mathfrak{R}^d_+)}+\sqrt{\sigma}\|Du\|_{L_{p,q,w}(\mathfrak{R}^d_+)}\leq&
\sigma\|\widehat{u}\|_{L_{p,q,\widehat{w}}(\mathfrak{R}^d)}+\sqrt{\sigma}\|D\widehat{u}\|_{L_{p,q,\widehat{w}}(\mathfrak{R}^d)}\\
\lesssim &\sqrt{\sigma}\|f\|_{L_{p,q,w}(\mathfrak{R}^d_+)}+\|g\|_{L_{p,q,w}(\mathfrak{R}^d_+)}.
\end{aligned}
\end{equation}
Therefore, the estimate \eqref{e5.3} follows. The proof is completed.
\end{proof}

\begin{proof}[\textbf{Proof of Theorem \ref{the2.3}}]
According to \eqref{e5.3},  we obtain 
\begin{equation*}
\|\widehat{u}\|_{p, q, \widehat{w}}+\|D\widehat{u}\|_{p, q, \widehat{w}}\lesssim \|\widehat{f}\|_{p, q, \widehat{w}}+\|\widehat{g}\|_{p, q, \widehat{w}}+\|\widehat{u}\|_{p, q, \widehat{w}},
\end{equation*}
for choosing $\sigma=\max\{\sigma_0, 1\}$, where $\|\cdot\|_{p, q, \widehat{w}}=\|\cdot\|_{L_{p,q,w}(\mathfrak{R}^d)}$.
Therefore, by Lemma \ref{lem5.1}, we deduce 
\begin{equation}\label{e5.5}
\|u\|_{L_{p,q,w}(\mathfrak{R}^d_+)}+\|Du\|_{L_{p,q,w}(\mathfrak{R}^d_+)}\lesssim \|f\|_{L_{p,q,w}(\mathfrak{R}^d_+)}+\|g\|_{L_{p,q,w}(\mathfrak{R}^d_+)}+\|u\|_{L_{p,q,w}(\mathfrak{R}^d_+)}.
\end{equation}
It is sufficient to prove that
\begin{equation}\label{e5.6}
\|u\|_{L_{p,q,w}(\mathfrak{R}^d_+)}\lesssim\|f\|_{L_{p,q,w}(\mathfrak{R}^d_+)}+\|g\|_{L_{p,q,w}(\mathfrak{R}^d_+)}.
\end{equation}
Similarly to the proof of Theorem \ref{the2.2}, we get
$\psi_l u \in \widetilde{\mathcal{Z}}_{p, q, w, 0}^{\alpha, 1}\left(\left(s_{n,l-1}, s_{n,l+1}\right) \times \mathbb{R}^{d}_+\right)$ and $\widehat{u\psi_{l}}$ satisfies \eqref{e4.14}.
Therefore, by Lemma \ref{lem5.1}, \eqref{e4.14} and \eqref{e4.15}, we deduce
\begin{equation}\label{e5.7}
\begin{aligned}
&\|u\|_{p, q, w\left(\left(s_{n,l-1}, s_{n,l+1}\right) \times \mathbb{R}^{d}_+\right)}\\
\lesssim & \epsilon\left\|D\left(u \psi_{l}\right)\right\|_{p, q, w\left(\left(s_{n,l-1}, s_{n,l+1}\right) \times \mathbb{R}^{d}_+\right)}+\left\|\partial_{t}^{\alpha}(u\psi_{l})^{\epsilon}\right\|_{p,q,\widehat{w}\left(\left(s_{n,l-1}, s_{n,l+1}\right) \times \mathbb{R}^{d}_+\right)}\\
\lesssim &\|f\|_{p,q,w\left(\left(0,T\right)\times \mathbb{R}^{d}_+\right)}+\|g\|_{p,q,w\left(\left(0,T\right)\times \mathbb{R}^{d}_+\right)},
\end{aligned}
\end{equation}
with choosing  a large enough $m$ and a small enough $\epsilon$ in the last inequality. The induction means that \eqref{e5.6} holds. Therefore, we get
\begin{equation*}
\left\|\partial_{t}^{\alpha} u\right\|_{\mathbb{Z}_{p, q, w}^{-1}(\mathfrak{R}^d_+)}+\|u\|_{L_{p,q,w}(\mathfrak{R}^d_+)}+\|D u\|_{L_{p,q,w}(\mathfrak{R}^d_+)} \lesssim\|f\|_{L_{p,q,w}(\mathfrak{R}^d_+)}+\|g\|_{L_{p,q,w}(\mathfrak{R}^d_+)}.
\end{equation*}
Consequently, by Theorem \ref{the2.2}, we deduce the existence of solutions for equation \eqref{e2.5} in the half space. The proof is completed.    
\end{proof}

\section*{Conflict of interest} 
The authors declare that they have no conflict of interest.

 \section*{Acknowledgements}
 This work is supported by the National Natural Science Foundation of China (12101142). 
 The authors are sincerely grateful to the referees for their helpful suggestions and comments.

\end{document}